\newtheorem{theorem}{Theorem}
\newtheorem{remark}{Remark}
\newtheorem{lemma}{Lemma}
\newtheorem{corollary}{Corollary}
\newcommand{\field}[1]{\mathbf{#1}}
\newcommand{\bb}{b}
\newcommand{\X}{X}
\newcommand{\x}{x}
\newcommand{\Y}{\field{Y}}
\newcommand{\HH}{H}
\newcommand{\betab}{\beta}
\DeclareMathOperator*{\argmin}{arg \min}
\newcommand{\PP}{P}
\newcommand{\NN}{N}
\newcommand{\Z}{Z}
\newcommand{\Deltab}{\Delta}
\newcommand{\vv}{v}
\newcommand{\RR}{R}
\newcommand{\XX}{\mathbb{X}}
\newcommand{\EE}{E}
\newcommand{\ND}{\mathcal{N}}
\DeclareMathOperator*{\supp}{supp}
\newcommand{\Sigmab}{\Sigma}
\DeclareMathOperator*{\Cor}{Cor}
\title{Selection  consistency of  Lasso-based procedures for misspecified high-dimensional binary  model and  random regressors}
\author[1,2]{\small Mariusz Kubkowski \footnote{Correspondence to: Mariusz Kubkowski, Institute of Computer Science, Polish Academy of Sciences, 5, Jana Kazimierza, 01-248 Warsaw, Poland, e-mail: m.kubkowski@ipipan.waw.pl}}
\author[1,2]{\small Jan Mielniczuk}
\affil[1]{\footnotesize Institute of Computer Science, Polish Academy of Sciences, Poland}
\affil[2]{\footnotesize Warsaw University of Technology, Poland}
\begin{document}
\maketitle

\begin{abstract}
We consider selection of random  predictors for  high-dimensional regression problem
with  binary response for a  general loss function. Important special case is  when the binary model is semiparametric and the response function is misspecified under parametric model fit. Selection for such a scenario aims at recovering the support of the minimizer of the associated risk with large probability. We propose a  two-step selection procedure which consists of screening and ordering predictors by Lasso method  and then selecting a subset of predictors which minimizes  Generalized Information Criterion on the corresponding nested family of models. We prove consistency of the selection method under conditions which allow for much larger number of predictors than number of observations. For the semiparametric case when  distribution of random  predictors satisfies linear regression conditions  the true and the estimated parameters are collinear and their  common support can be  consistently identified.
\end{abstract}

\noindent \textbf{Keywords: } high-dimensional regression, loss function, random predictors, misspecification, consistent selection, subgaussianity, Generalized Information Criterion


\section{Introduction}
\label{a70sec1}
We consider random variable $(X^{(n)}, Y^{(n)})\in R^{p_n}\times \{0,1\}$ and corresponding response function defined as a posteriori probability $q_n(x)=P(Y^{(n)}=1|X^{(n)}=x)$.
We  adopt triangular scenario and assume that  $n$  copies $X_1^{(n)},\ldots,X_n^{(n)}$ of a random vector   $X^{(n)}$ in $R^{p_n}$ are observed
together with corresponding binary responses $Y_1^{(n)},\ldots,Y_n^{(n)}$. We assume that observations $(X_i^{(n)},Y_i^{(n)}),\,i=1,\ldots,n $	are iid.
Let $X_i^{(n)}=(X_{i1}^{(n)},\ldots, X_{ip_n}^{(n)})'$. Frequently considered scenario is the  sequential  one.  In this case,  when sample size  $n$  increases we    observe  values of  new   
 predictors  additionally to the ones observed earlier. This  is a special case of the above scheme as then $X_i^{(n+1)}=
({X_i^{(n)T}},X_{i,p_n+1},\ldots,X_{i,p_{n+1}})^T$.
In the following we will skip the upper index $n$ if no ambiguity arises. Moreover, we write $q(x)=q_n(x)$.
We assume that coordinates  $X_{ij}$  of $X_i$ are subgaussian $Subg(\sigma_{jn}^2)$ with subgaussianity parameter $\sigma_{jn}^2$ i.e. it holds that 
$$E\exp(tX_{ij})\leq \exp(t^2\sigma_{jn}^2/2)$$ for all $t\in R$. 
 For future reference let
\[ s_n^2=\max_{j=1,\ldots,p_n}\sigma_{jn}^2\]
and assume in the following that 
\begin{equation}
\label{subG1}
 \gamma^2:=\limsup_n s_n^2<\infty. 
\end{equation}
 In the sequential scenario this is equivalent to an assumption that all subgaussianity parameters are bounded from above.
We assume  moreover that $X_{i1},\ldots, X_{i{p_n}}$ are linearly independent in the sense that
their arbitrary linear combination is not constant almost everywhere.
 In the following ${\XX}_n=(X_1,\ldots,X_n)^T$ will denote matrix of experiment of
dimension $n\times p_n$. \\
For the regression  defined above we consider loss function of the form
\begin{equation}
\label{loss}
 l( b, x, y) =\rho({ b}^T{ x},y),  
 \end{equation}
where $\rho: R\times\{0,1\} \to R$ is some  function, $\bb,\x\in\RR^{p_n},$ $y\in\{0,1\}$ and 
$$ R(\bb) = \EE l(\bb,\x,y)$$
is associated risk function for $\bb \in \RR^{p_n}$. Our aim is to determine the support of $\beta^*$, where 
\begin{equation}\label{def_beta_star_without_intercept}
\beta^* ={\rm argmin}_{b\in R^{p_n}}  R(b). 
\end{equation}
Coordinates of $\beta^*$ corresponding to non-zero coefficients will  be called  active predictors and vector  $\beta^*$   a pseudo-true vector.
This terminology stems from the important special case of our general setting: misspecification case of semiparametric model.
Namely, consider a semiparametric model for which response function is given 
in semiparametric form 
\begin{equation}
\label{semiparam}
q({ x})=: q(\beta^T{ x})
\end{equation}
 for some fixed $\beta$ and unknown $q$. When the loss defined in (\ref{loss})  does not coincide with minus  conditional log-likelihood 
 -$E(Y\log q(b^T{ X})+ (1-Y)\log (1-q(b^T{ X}))$
 pertaining to
$q(b^T{ x})$, in particular when fitted  parametric model is given by
a response function $q_0\not\equiv q$ 
then the model is misspecified. Questions of robustness analysis evolve around the interplay between $\beta$ and $\beta^*$, in particular under what conditions  the directions of $\beta$ and $\beta^*$ coincide (cf important contribution in \cite{Brillinger82} and \cite{Ruud83}). \\
In  the paper we consider  properties of $\beta^*$ for a general loss function  and the case of  
misspecified semiparametric model (\ref{semiparam}) as a special case of this setup.
 For $s\subseteq \{1,\ldots,p_n\}$  let $\beta^*(s)$ be defined as in (\ref{def_beta_star_without_intercept}) when minimum is taken over $b$ with support in $s$.
We define
\[ s^*={\rm supp}(\beta^*(\{1,\ldots,p_n\})=\{i\leq p_n:\beta_{i}^*\neq 0\},\]
denote the support of $\beta^*(\{1,\ldots,p_n\})$
with  $\beta^*(\{1,\ldots,p_n\}) =(\beta_{1}^*,\ldots, \beta_{p_n}^*)^T$.

Let $v_{\pi} = (v_{j_1},\ldots,v_{j_k})^T\in \RR^{|\pi|}$ for $v\in \RR^{p_n}$ and $\pi =\{j_1,\ldots,j_k\} \subseteq \{1,\ldots,p_n\}$. Let $\beta^*_{s^*}\in R^{| s^*|}$ be  $\beta^* = \beta^*(\{1,\ldots,p_n\})$ restricted to its support  $s^*$.
Note that if $ s^*\subseteq s$, then  provided projections are unique (see Section 2) we have
\[ \beta^*_{s^*}=\beta^*(s^*)_{s^*}=\beta^*(s)_{s^*}.\]
Moreover, let $$\beta_{min}^*=\min_{i\in s^*} |\beta_{i}^*|.$$
We remark  that   $\beta^*$, $s^*$ and $\beta_{min}^*$  may  depend  on $n$.
Note that when the parametric model is correctly specified i.e. $q(x)=q(\beta^Tx)$ for some $\beta$ with $l$ being an associated loglikelihhood loss and if  $s$ is the support of  $\beta$  then  we have
 $s=s^*$.\\
 For fixed number $p$ of predictors smaller than sample size $n$ statistical consequences of misspecification of a semiparametric regression model have been intensively studied by H. White and his collaborators in the 80s of the last century.  The concept of  projection on the  fitted parametric model  is central to this investigations  which show how  the distribution of maximum likelihood estimator of $\beta^*$ centered by $\beta^*$ changes under misspecification (cf e.g. \cite{White82} and \cite{Vuong89}). However for the case when $n>p$ the maximum likelihood estimator which is  a natural tool for fixed $p<n$ case is ill-defined and a natural question arises what can be estimated and by what means
 in this case.\\
For high-dimensional case one of possible solutions is to consider two-stage methods
in which the first stage results in  screened subset of regressors with cardinality smaller than $n$ and the second stage employs one of known methods for fixed $p$ case. As the set of regressors for the second stage is random the properties of the procedure need to be thoroughly reconsidered.\\
In the paper  first stage of the procedure is  is based on Lasso  estimation
\begin{equation}
\label{Lasso}
\hat\beta_L={\rm argmin}_{\bb\in \RR^{p_n}}\{R_n(\bb) +
\lambda_L\sum_{i=1}^{p_n} |b_i|\}
\end{equation}
where  $b=(b_1,\ldots,b_{p_n})^T$ and the empirical risk  $R_n(\bb)$ is
\begin{equation*}
\label{a70ll}
R_n(\bb) = \sum_{i=1}^n \rho(\bb^T\X_i,Y_i).
\end{equation*}
Parameter $\lambda_L>0$ is Lasso penalty which penalizes  large $l_1$-norms  of
potential candidates for a solution.  Note that  criterion function in (\ref{Lasso}) for $\rho(s,y)=\log(1+\exp(-s(2y-1))$ can be viewed as penalized empirical risk for the logistic loss.
Lasso estimator is thoroughly studied in
the case of the linear model when considered loss is square loss (see e.g. \cite{BuhlmannGeer11} and \cite{HastieEtAl15}
for references and overview of the subject) and some of the papers  treat the case when such model is fitted to $Y$ which is not necessarily
linearly dependent on regressors (cf \cite{BickelEtAl09} ).
In this case regression model is misspecified w.r.t. linear fit.  
However,  similar results are scarse for other scenarios such as logistic fit under misspecification in particular. One of the notable exceptions is \cite{Negahbanetal2012} where behaviour of Lasso estimate is studied for a general loss function including logistic loss for possibly misspecified models.
For a  recent contributions to study of Kullback-Leibler projections on logistic model (which coincide with (\ref{def_beta_star_without_intercept}) for logistic loss)
 and references we
refer to \cite{KubkowskiMielniczuk17a} and \cite{KubkowskiMielniczuk18}. We also refer to \cite{Luetal012}, where asymptotic distribution  of adaptive Lasso is studied under misspecification
in the case of fixed number of deterministic predictors. 
 In the following
we prove approximation result for  Lasso estimator when predictors are random and $\rho$ is a convex Lipschitz function (cf Theorem 1).
An useful  corollary
of it is determination  of sufficient
 conditions under which active predictors can be separated from spurious ones based on the absolute values of corresponding coordinates of Lasso estimator. This makes construction of nested family containing $s^*$ with large probability possible. In the general framework allowing for misspecification we call selection rule $\hat s^*$ consistent if 
$P(\hat s^*= s^*)\to 1$ when $n\to\infty$.\\
In the case of semiparametric model when predictors are eliptically contoured ( e.g. multivariate normal) it is known that $\beta^*=\eta\beta$ i.e. these two vectors are collinear (\cite{LiDuan89}). Thus in  case when $\eta\neq 0$ we have that $s^*$ coincides with  support $s$  of $\beta$ and the selection consistency of two-step procedure  proved in the paper entails  direction and support recovery of $\beta$.\\
The main objective of the paper is to  prove consistency of two-stage selection procedure which  consists of ordering of predictors according to the absolute values of  corresponding Lasso estimators  and then minimization of Generalized Information Criterion GIC) on resulting nested family.This is a variant of SOS (Screening-Ordering-Selection) procedure introduced in \cite{PokarowskiMielniczuk2015} in the case of the linear model, where the ordering of predictors chosen by Lasso was with respect to absolute values of $t$ statistics from the  linear fit based on these predictors. Here we consider a simpler scheme for which both  screening and ordering of regressors  is based on Lasso fit. 
We consider the case when predictors are subgaussian random variables. The stated  results  to the best of our knowledge are not available
for random predictors even when the model is correctly specified. 
For the second stage we assume that the number  of active predictors is bounded by a deterministic sequence $k_n$ tending to infinity and we minimize GIC on family ${\cal M}$ of models with sizes satisfying also this condition. Such  exhaustive search has been proposed in  \cite{ChenChen08} for linear models  and extended to GLMs in 
\cite{ChenChen12}, see also \cite{MielniczukSzymanowski15}. In these papers GIC has been optimised on all possible subsets of regressors with cardinality not exceeding certain constant $k_n$. Such method is feasible for practical purposes only when $p_n$ is small.
 Here we consider a similar setup but with important differences: 
 ${\cal M}$ is a data-dependent small  nested family of models and 
 optimization of GIC is considered in the case when the original model is misspecified.
 The regressors are assumed random and assumptions are  carefully tailored to this case.\\ 
In numerical experiments we study the performance of grid version of  logistic  and linear SOS and compare it to its several lasso-based competitors.\\
The paper is organized as follows. Section 2 contains auxiliaries, including  new useful probability inequalities for empirical risk in the case  of subgaussian random variables (Lemma 2).  In Section 3 we prove a bound on approximation error for Lasso for misspecified logistic model and random regressors (Theorem 1) which yields separation property of Lasso.  In  Theorems 2 and 3 of Section 4 we prove GIC consistency on nested  family, which in particular can be built according to the order of the
Lasso coordinates.  In Corollary 5 we discuss consequences of the proved  semiparametric binary model when distribution of predictors satisfies linear regressions condition.
In Section 5 we  compare the performance of two-stage selection method for two closely related models, one of which is a logistic model and the second one is misspecified.
\section{ Definitions and auxiliary results}

We assume throughout existence and uniqueness of  projection vector $\betab^*$ which has been defined in (\ref{def_beta_star_without_intercept}). 
We consider cones of the form:
\begin{equation}
\mathcal{C}_{\varepsilon} = \{ \Deltab \in \RR^{p_n} \colon ~ ||\Deltab_{s^{*c}}||_1 \le (3+\varepsilon) ||\Deltab_{s^*}||_1 \}, \label{eq_lasso_cone}
\end{equation}
where $\varepsilon>0$, $s^{*c} = \{1,\ldots,p_n\}\setminus s^*$ and $\Deltab_{s^*} = (\Delta_{s^*_1},\ldots,\Delta_{s^*_{|s^*|}})$ for $s^*=\{s^*_1,\ldots,s^*_{|s^*|}\}$. Cones $\mathcal{C}_{\varepsilon}$ are of special importance 
(see Lemma \ref{lemma_lasso_oracle_inclusion}). For  cone $\mathcal{C}_{\varepsilon}$ we define a quantity $\kappa_{\HH}(\varepsilon)$ which can be regarded as generalized minimal eigenvalue of a matrix in high-dimensional setup:
\begin{equation}
\kappa_{\HH}(\varepsilon) = \inf\limits_{\Deltab\in \mathcal{C}_{\varepsilon}\setminus\{0\}} \frac{\Deltab^T \HH \Deltab}{\Deltab^T\Deltab},  \label{eq_lasso_kappa0}
\end{equation}
where $\HH\in\RR^{p_n\times p_n}$ is non-negative definite matrix,
which in the considered context is usually taken as hessian $D^2 R(\betab^*)$ .

Let $R$ and $R_n$ be the risk and the empirical risk defined above. Moreover, we introduce the following notation:
\begin{gather}
W(\bb) = R(\bb) - R(\betab^*),  \\
W_n(\bb) = R_n(\bb) - R_n(\betab^*),  \\
B_p(r) = \{ \Deltab \in \RR^{p_n} \colon ~ ||\Deltab||_p \le r \}, \text{ for } p=1,2,\\
\beta^*_{min} = \min\limits_{i\in s^*} |\beta_i^*|, \\  
S(r) = \sup\limits_{\bb \in \RR^{p_n}:\bb-\betab^* \in B_1(r)} |W(\bb) - W_n(\bb)|. \label{eq_lasso_S}
\end{gather}

We will need the following margin condition in Lemma \ref{lemma_lasso_oracle_inclusion} and Theorem \ref{th_lasso_beta_min_property_general_loss}:
\begin{enumerate}[label = (MC),ref = (MC)]
\item\label{assumpt_lasso_margin_condition} There exist  $\vartheta,\varepsilon,\delta>0$ and non-negative definite matrix $\HH\in\RR^{p_n\times p_n}$ such that for all $\bb$ with $\bb - \betab^* \in \mathcal{C}_{\varepsilon} \cap B_1(\delta)$ we have 
$$R(\bb) - R(\betab^*) \ge \frac{\vartheta}{2} (\bb - \betab^*)^T \HH (\bb - \betab^*).$$
\end{enumerate}
The above condition can be viewed as a weaker version of strong convexity of function $R$ in the restricted neighbourhood of $\betab^*$ (namely in the intersection of ball $B_1(\delta)$ and cone $\mathcal{C}_{\varepsilon}$). We stress the fact that $\HH$ does not need to be positive definite, as in the Section \ref{section_lasso_general} we use \ref{assumpt_lasso_margin_condition} together with stronger conditions than $\kappa_{\HH}(\varepsilon)>0$ which imply that right hand side of inequality in  \ref{assumpt_lasso_margin_condition} is positive. We also do not require here twice differentiability of $R$. We note in particular that condition \ref{assumpt_lasso_margin_condition} is satisfied in the case of logistic loss, $\X$ being bounded random variable and $\HH = D^2 R(\betab^*)$ (see \cite{Fanetal2014supplement}\nocite{Fanetal14} and \cite{Bach2010}). It is also easily seen that that \ref{assumpt_lasso_margin_condition} is satisfied for quadratic loss, $\X$ satisfying $\EE ||\X||_2^2<\infty$ and $\HH = D^2 R(\betab^*)$. Similar condition to \ref{assumpt_lasso_margin_condition} (called Restricted Strict Convexity) was considered in \cite{Negahbanetal2012} for empirical risk $R_n$:
\begin{equation*}
R_n(\betab^*+\Deltab)-R_n(\betab^*) \ge DR_n(\betab^*)^T\Deltab + \kappa_L ||\Deltab||^2 - \tau^2(\betab^*) 
\end{equation*}
for all $\Deltab\in C(3,s^*)$, some $\kappa_L>0$ and tolerance function $\tau$.

Another important assumption, used in the Theorem \ref{th_lasso_beta_min_property_general_loss} and Lemma \ref{lemma_lasso_S_tail} is the Lipschitz property of $\rho:$
\begin{enumerate}[label = (LL),ref = (LL)]
\item\label{assumpt_lasso_rho_lipschitz} $\exists L>0\ \forall b_1,b_2 \in \RR,y\in\{0,1\}\colon\ |\rho(b_1,y) - \rho(b_2,y)| \le L|b_1-b_2|$. 
\end{enumerate}
Let $|w|$ stand for dimension of $w$.
For  the second step of the procedure we consider an arbitrary family $\mathcal{M} \subseteq 2^{\{1,\ldots,p_n\}}$ of models (which are identified with  subsets of $\{1,\ldots,p_n\}$ and may be data-dependent) such that $s^*\in\mathcal{M},\forall w\in\mathcal{M}:\ |w|\le k_n$ a.e. and $k_n\in\NN_{+}$ is some deterministic sequence. We define Generalized Information Criterion (GIC) as:
\begin{equation}\label{eq_GIC_def}
GIC(w) = nR_n(\hat{\betab}(w)) + a_n|w|,
\end{equation}
where 
\begin{equation*}
\hat{\betab}(w)=\argmin\limits_{\bb\in\RR^{p_n}\colon ~ \bb_{w^c} = 0_{|w^c|}} R_n(\bb)
\end{equation*}
is ML estimator for model $w$ and $a_n>0$ is some penalty. Typical examples of $a_n$ include:
\begin{itemize}
\item AIC (Akaike Information Criterion): $a_n = 2$,
\item BIC (Bayesian Information Criterion): $a_n = \log n$,
\item EBIC($d$) (Extended BIC): $a_n = \log n + 2d\log p_n$, where $d>0$. 
\end{itemize} 

We will study properties of $S_k(r)$ for $k=1,2$, where:

\begin{equation}
\label{S_k}
S_k(r) = \sup\limits_{\bb\in D_k: \bb - \betab^* \in B_2(r)} |(W_n(\bb) - W(\bb)|
\end{equation}

and is the maximal absolute value of the centred empirical risk $W_n(\cdot)$ and  
sets $D_k$ for $k=1,2$ are defined as follows:
\begin{gather}
D_1 = \{\bb\in \RR^{p_n}\colon ~ \exists w\in \mathcal{M}\colon ~ |w|\le k_n \wedge s^*\subset w \wedge \supp \bb \subseteq w \}, \\
D_2 = \{\bb\in \RR^{p_n}\colon ~ \supp\bb \subset s^* \}.
\end{gather}

We note that such definitions of $D_i$ for $i=1,2$ guarantee that if $\bb\in D_i$, then $|\supp(\bb-\betab^*)|\le k_n$, what we exploit in Lemma \ref{lemma_lasso_S_tail}. Moreover, in Section \ref{sect_GIC_consistency} we consider the following condition for $\epsilon>0$, $w\subseteq \{1,\ldots,p_n\}$ and some $\theta>0$:
\begin{enumerate}[label = $C_{\epsilon}(w)\colon$,ref = $C_{\epsilon}(w)$]
\item\label{eq_GIC_margin_condition} $R(\bb) - R(\betab^*) \ge \theta ||\bb - \betab^*||_2^2$  for all $\bb\in\RR^{p_n}$ such that $\supp \bb \subseteq w$ and $\bb-\betab^* \in B_2(\epsilon).$
\end{enumerate}

We observe also that the conditions \ref{assumpt_lasso_margin_condition} and \ref{eq_GIC_margin_condition} are not equivalent, as they hold for $\vv = \bb-\betab^*$ belonging to different sets: $B_1(r)\cap \mathcal{C}_{\varepsilon}$ and $B_2(\epsilon)\cap\{\Deltab\in\RR^{p_n}\colon ~ \supp\Deltab\subseteq w\}$, respectively. We note that if the  minimal eigenvalue $\lambda_{min}$ of  matrix $\HH$ in condition \ref{assumpt_lasso_margin_condition} is positive 
and \ref{assumpt_lasso_margin_condition} holds for $\bb-\betab^*\in B_1(r)$ (instead of for $\bb - \betab^* \in \mathcal{C}_{\varepsilon} \cap B_1(r)$) then we have for $\bb-\betab^* \in B_2(r/\sqrt{p_n})\subseteq B_1(r)$:
\begin{equation*}
R(\bb) - R(\betab^*) \ge \frac{\vartheta}{2} (\bb - \betab^*)^T \HH (\bb - \betab^*) \ge \frac{\vartheta \lambda_{min}}{2} ||\bb-\betab^*||_2^2.
\end{equation*}
Furthermore, if  $\lambda_{max}$ is the maximal eigenvalue of $H$
and \ref{eq_GIC_margin_condition} holds for all $\vv=\bb-\betab^*\in B_2(r)$ without restriction on $\supp \bb$, then we have for $\bb -\betab^* \in B_1(r)\subseteq B_2(r)$:
\begin{equation*}
R(\bb) - R(\betab^*) \ge \theta ||\bb - \betab^*||_2^2 \ge \frac{\theta}{\lambda_{max}} (\bb - \betab^*)^T \HH (\bb - \betab^*).
\end{equation*}
Similar condition to \ref{eq_GIC_margin_condition} for empirical risk $R_n$ was considered in \cite[formula (2.1)]{KimJeon2016} in the context of GIC minimization.

It turns out that condition \ref{eq_GIC_margin_condition} together with $\rho(\cdot,y)$ being convex for all $y$ and satisfying Lipschitz condition \ref{assumpt_lasso_rho_lipschitz} are sufficient to establish bounds which ensure GIC consistency for $k_n\ln p_n = o(n)$ and $k_n\ln p_n = o(a_n)$ (see Corollaries \ref{coro_GIC_consistency_supsets} and \ref{coro_GIC_consistency_subsets}).

\begin{lemma}\label{lemma_basic_inequality} (Basic inequality). Let $\rho(\cdot,y)$ be convex function for all $y.$ If for some $r>0$ we have:
\begin{equation*}
u=\frac{r}{r + ||\hat{\betab}_L-\betab||_1}, \quad
\vv = u \hat{\betab}_L + (1-u) \betab^*,
\end{equation*}
then:
$$W(\vv) + \lambda ||\vv - \betab^*||_1  \le S(r) + 2\lambda ||\vv_{s^*} - \betab^*_{s^*}||_1.$$
\end{lemma}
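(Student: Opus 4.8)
The plan is to run the classical Lasso ``basic inequality'' argument, but transported from the empirical minimizer $\hat{\betab}_L$ to the convex combination $\vv$ by using convexity of the penalized empirical criterion, and then to convert the resulting bound on the centred empirical risk $W_n(\vv)$ into a bound on the population quantity $W(\vv)$ via the uniform deviation $S(r)$.

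First I would record the optimality of $\hat{\betab}_L$: since it minimizes $R_n(\bb)+\lambda||\bb||_1$ over $\RR^{p_n}$, we have
$$R_n(\hat{\betab}_L)+\lambda||\hat{\betab}_L||_1 \le R_n(\betab^*)+\lambda||\betab^*||_1.$$
Because $\rho(\cdot,y)$ is convex for every $y$, the map $\bb\mapsto R_n(\bb)+\lambda||\bb||_1$ is convex, so evaluating it at $\vv=u\hat{\betab}_L+(1-u)\betab^*$ and then invoking the previous line gives
$$R_n(\vv)+\lambda||\vv||_1 \le u(R_n(\hat{\betab}_L)+\lambda||\hat{\betab}_L||_1)+(1-u)(R_n(\betab^*)+\lambda||\betab^*||_1) \le R_n(\betab^*)+\lambda||\betab^*||_1.$$
Subtracting $R_n(\betab^*)$ yields $W_n(\vv)\le\lambda(||\betab^*||_1-||\vv||_1)$. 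Splitting the $\ell_1$-norms over $s^*$ and $s^{*c}$, using that $\betab^*$ is supported on $s^*$, and applying the triangle inequality on $s^*$, I would arrive at
$$W_n(\vv)\le\lambda||\vv_{s^*}-\betab^*_{s^*}||_1-\lambda||\vv_{s^{*c}}||_1.$$

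Next I would pass from $W_n$ to $W$. The decisive observation — and the only place the particular definition of $u$ is used — is that $\vv-\betab^*=u(\hat{\betab}_L-\betab^*)$, so $||\vv-\betab^*||_1=\frac{r||\hat{\betab}_L-\betab^*||_1}{r+||\hat{\betab}_L-\betab^*||_1}<r$; hence $\vv-\betab^*\in B_1(r)$ regardless of how large $||\hat{\betab}_L-\betab^*||_1$ is, and therefore $|W(\vv)-W_n(\vv)|\le S(r)$ by the definition of $S(r)$ in \eqref{eq_lasso_S}. Consequently $W(\vv)\le W_n(\vv)+S(r)$. Finally, adding $\lambda||\vv_{s^*}-\betab^*_{s^*}||_1$ to both sides and recombining the two pieces $\lambda||\vv_{s^*}-\betab^*_{s^*}||_1+\lambda||\vv_{s^{*c}}||_1=\lambda||\vv-\betab^*||_1$ (again using $\betab^*_{s^{*c}}=0$) converts the right-hand side into $S(r)+2\lambda||\vv_{s^*}-\betab^*_{s^*}||_1$ and the left-hand side into $W(\vv)+\lambda||\vv-\betab^*||_1$, which is exactly the claim.

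The argument is essentially mechanical once the reduction to $\vv$ is in place, so I expect the only genuinely delicate point to be the verification that $||\vv-\betab^*||_1<r$: this is precisely what licenses replacing the difference $W-W_n$ at $\vv$ by $S(r)$, and it is the reason $u$ is taken as a self-normalizing fraction rather than a fixed constant, since it removes the need for any a priori control of $||\hat{\betab}_L-\betab^*||_1$.
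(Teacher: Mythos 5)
Your proof is correct and follows essentially the same route as the paper's: Lasso optimality plus convexity to transfer the basic inequality to the intermediate point $\vv$, the observation that $\vv-\betab^*\in B_1(r)$ so that $|W(\vv)-W_n(\vv)|\le S(r)$, and an $\ell_1$ splitting over $s^*$ and $s^{*c}$ combined with the triangle inequality. The only cosmetic difference is that you apply convexity once to the penalized objective $R_n(\cdot)+\lambda||\cdot||_1$ and invoke $S(r)$ at the very end, whereas the paper applies convexity of $R_n$ and of the $\ell_1$ norm separately and inserts the deviation bound $S(r)$ midway; the intermediate inequality $W(\vv)+\lambda||\vv||_1\le S(r)+\lambda||\betab^*||_1$ is identical in both arguments.
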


Proof  of  the lemma is moved to Appendix. Quantities $S_k(r)$ are diefined in (\ref{S_k}).
\begin{lemma}\label{lemma_lasso_S_tail}
Let $\rho(\cdot,y)$ be convex function for all $y$ and satisfy Lipschitz condition \ref{assumpt_lasso_rho_lipschitz}.
 Assume that $X_{ij}$ for $j\ge 1$ are subgaussian $Subg(\sigma_{jn}^2)$, where $\sigma_{jn}\le s_n$. Then for $r,t>0$:
\begin{enumerate}
\item\label{lemma_lasso_S_tail_p1} $\PP(S(r)>t) \le \frac{8Lrs_n \sqrt{\log (p_n\vee 2)}}{t \sqrt{n}}$,
\item\label{lemma_lasso_S_tail_p2} $\PP(S_1(r) \ge t) \le \frac{8Lrs_n \sqrt{k_n\ln(p_n \vee 2)}}{t\sqrt{n}} $,
\item\label{lemma_lasso_S_tail_p3} $\PP(S_2(r) \ge t) \le \frac{4Lrs_n\sqrt{|s^*|}}{t\sqrt{n}} $.
\end{enumerate}
\end{lemma}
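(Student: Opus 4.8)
\textit{Proof plan.} The three bounds share a common structure, so I would treat them in parallel and specialize only at the final geometric step. Write $\Deltab = \bb - \betab^*$ and set $\phi_i(t)=\rho(\betab^{*T}\X_i+t,Y_i)-\rho(\betab^{*T}\X_i,Y_i)$, so that in every case $W_n(\bb)-W(\bb)=\frac1n\sum_{i=1}^n\big(\phi_i(\Deltab^T\X_i)-\EE\phi_i(\Deltab^T\X_i)\big)$ is a centred empirical average and each $S_\bullet(r)$ (where $S_\bullet\in\{S,S_1,S_2\}$) is its supremum over an admissible set of $\Deltab$ inherited from $B_1(r)$, $D_1\cap B_2(r)$, or $D_2\cap B_2(r)$. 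The first step is the standard symmetrization inequality, bounding the expected supremum of this centred process by $2\,\EE\sup_{\Deltab}|\frac1n\sum_i\varepsilon_i\phi_i(\Deltab^T\X_i)|$, where $\varepsilon_1,\ldots,\varepsilon_n$ are independent Rademacher signs independent of the data (the index classes are separable, so measurability is not an issue).

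The key step, which I expect to carry the argument, is the Ledoux--Talagrand contraction principle applied conditionally on $(\X_i,Y_i)_{i\le n}$. Each $\phi_i$ is $L$-Lipschitz by \ref{assumpt_lasso_rho_lipschitz} and satisfies $\phi_i(0)=0$, so contraction strips the nonlinearity at the cost of a factor $2L$ and reduces the problem to the linear process $\frac1n\sum_i\varepsilon_i\Deltab^T\X_i=\Deltab^T G$, where $G=\frac1n\sum_{i=1}^n\varepsilon_i\X_i$. Thus $\EE S_\bullet(r)\le 4L\,\EE\sup_{\Deltab}|\Deltab^T G|$ in all three cases. The care needed here is only to verify that the $\phi_i$ are genuinely $L$-Lipschitz uniformly in the data and vanish at $0$, both immediate from \ref{assumpt_lasso_rho_lipschitz}; there is no single hard estimate beyond this, the rest being routine subgaussian maximal inequalities.

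It then remains to evaluate $\sup_{\Deltab}|\Deltab^T G|$ by duality and to control the relevant norm of $G$. For part \ref{lemma_lasso_S_tail_p1}, H\"older gives $\sup_{||\Deltab||_1\le r}|\Deltab^T G|=r\,||G||_\infty$. For part \ref{lemma_lasso_S_tail_p2}, every admissible $\Deltab$ has $\supp\Deltab\subseteq w$ for some $w$ with $|w|\le k_n$, hence is $k_n$-sparse, so $|\Deltab^T G|\le ||\Deltab||_2\,||G_{\supp\Deltab}||_2\le r\sqrt{k_n}\,||G||_\infty$; for part \ref{lemma_lasso_S_tail_p3}, $\supp\Deltab\subseteq s^*$ yields $\sup|\Deltab^T G|=r\,||G_{s^*}||_2$. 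Computing the Rademacher expectation first, each coordinate $G_j=\frac1n\sum_i\varepsilon_i X_{ij}$ is centred and $Subg(\sigma_{jn}^2/n)\subseteq Subg(s_n^2/n)$, since $\EE_\varepsilon e^{t\varepsilon_i X_{ij}}=\cosh(tX_{ij})$ and $\EE\cosh(tX_{ij})\le e^{t^2\sigma_{jn}^2/2}$. A maximal inequality for subgaussians gives $\EE||G||_\infty\le (s_n/\sqrt n)\sqrt{2\log(2p_n)}$, while $\EE||G_{s^*}||_2\le(\sum_{j\in s^*}\EE G_j^2)^{1/2}\le s_n\sqrt{|s^*|}/\sqrt n$, using that subgaussianity forces $\EE G_j^2\le s_n^2/n$. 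Combining these with the elementary bound $\log(2p_n)\le 2\log(p_n\vee 2)$ to absorb the constant produces $\EE S(r)\le 8Lrs_n\sqrt{\log(p_n\vee2)}/\sqrt n$, $\EE S_1(r)\le 8Lrs_n\sqrt{k_n\log(p_n\vee2)}/\sqrt n$, and $\EE S_2(r)\le 4Lrs_n\sqrt{|s^*|}/\sqrt n$. Finally Markov's inequality $\PP(S_\bullet(r)>t)\le\EE S_\bullet(r)/t$ delivers the three stated tail bounds.
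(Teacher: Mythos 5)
Your proposal is correct and follows essentially the same route as the paper's proof: Markov's inequality, symmetrization, the Ledoux--Talagrand contraction principle (using \ref{assumpt_lasso_rho_lipschitz} and $\phi_i(0)=0$ to reduce to the linear Rademacher process at cost $4L$), then H\"older duality with the subgaussian maximal inequality for parts \ref{lemma_lasso_S_tail_p1}--\ref{lemma_lasso_S_tail_p2} and the second-moment bound $\EE||G_{s^*}||_2\le s_n\sqrt{|s^*|}/\sqrt{n}$ for part \ref{lemma_lasso_S_tail_p3}. The only cosmetic differences are notational (your $\phi_i$, $G$ versus the paper's inline sums) and your explicit verification that $\varepsilon_i X_{ij}$ is subgaussian via $\cosh$, which the paper handles by the same observation.
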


\begin{proof}
From the Chebyshev inequality (first inequality below), symmetrization inequality (see \cite[Lemma 2.3.1]{VanDerVaartWellner1996}) and Talagrand - Ledoux inequality (\cite[Theorem 4.12]{LedouxTalagrand1991}) we have for $t>0$ and $(\varepsilon_i)_{i=1,\ldots,n}$ being Rademacher variables independent of $(\X_i)_{i=1,\ldots,n}$:
\begin{align}
\PP(S(r)>t) & \le \frac{\EE S(r)}{t}\nonumber\\ 
& \le \frac{2}{tn} \EE \sup\limits_{\bb\in\RR^{p_n}:\bb - \betab^* \in B_1(r)} \left| \sum\limits_{i=1}^{n} \varepsilon_i ( \rho(\X_i^T\bb,Y_i) - \rho(\X_i^T\betab^*,Y_i) ) \right| \nonumber\\
& \le \frac{4L}{tn} \EE \sup\limits_{\bb\in\RR^{p_n}:\bb - \betab^* \in B_1(r)} \left| \sum\limits_{i=1}^{n} \varepsilon_i \X_i^T (\bb - \betab^*) \right|. \label{lemma_lasso_S_tail_eq1}
\end{align}

We observe that $\varepsilon_i X_{ij} \sim Subg(\sigma_{jn}^2).$ Hence, using independence we obtain $\sum\limits_{i=1}^{n} \varepsilon_i X_{ij} \sim Subg(n\sigma_{jn}^2)$ and thus $\sum\limits_{i=1}^{n} \varepsilon_i X_{ij} \sim Subg(ns_n^2).$
Applying H{\"o}lder inequality and  the following inequality (see \cite[Lemma 2.2]{DevroyeLugosi2012}):
\begin{equation}\label{lemma_lasso_S_tail_eq2}
\EE \left|\left| \sum\limits_{i=1}^{n} \varepsilon_i X_{ij} \right|\right|_{\infty} \le \sqrt{n} s_n \sqrt{2\ln(2p_n)} \le 2 s_n \sqrt{n \ln(p_n \vee 2)}
\end{equation}
we have:
\begin{align*}
\frac{4L}{tn} \EE \sup\limits_{\bb\in\RR^{p_n}:\bb - \betab^* \in B_1(r)} \left| \sum\limits_{i=1}^{n} \varepsilon_i \X_i^T (\bb - \betab^*) \right| &\le \frac{4Lr}{t} \EE \max\limits_{j\in\{1,\ldots,p_n\}} \left| \frac{1}{n} \sum\limits_{i=1}^{n} \varepsilon_i X_{ij} \right| \\
&\le \frac{8Lrs_n\sqrt{\log (p_n\vee 2)}}{t\sqrt{n}}.
\end{align*}
From this part \ref{lemma_lasso_S_tail_p1} follows. In the proofs of parts \ref{lemma_lasso_S_tail_p2}-\ref{lemma_lasso_S_tail_p3} first inequalities are identical as in (\ref{lemma_lasso_S_tail_eq1}) with supremums taken on corresponding sets.
Using Cauchy-Schwarz inequality, inequality $||\vv||_2 \le \sqrt{||\vv||_0}||\vv||_{\infty}$, inequality $||\vv_{\pi}||_{\infty} \le ||\vv||_{\infty}$ for $\pi \subseteq \{1,\ldots,p_n\}$ and (\ref{lemma_lasso_S_tail_eq2}) yields:
\begin{align*}
\PP(S_1(r) \ge t) 
& \le \frac{4L}{nt} \EE \sup \limits_{\bb\in D_1\colon  \bb - \betab^* \in B_2(r)} \left| \sum\limits_{i=1}^{n} \varepsilon_i\X_i^T(\bb - \betab^*)\right| \\
&\le \frac{4Lr}{nt} \EE \max\limits_{\pi\subseteq\{1,\ldots,p_n\}, |\pi|\le k_n} \left|\left|\sum\limits_{i=1}^{n} \varepsilon_i \X_{i,\pi} \right|\right|_2 \\
&\le  \frac{4Lr}{nt} \EE \max\limits_{\pi\subseteq\{1,\ldots,p_n\}, |\pi|\le k_n} \sqrt{|\pi|} \left|\left|\sum\limits_{i=1}^{n} \varepsilon_i \X_{i,\pi} \right|\right|_{\infty} \\
&\le \frac{4Lr\sqrt{k_n}}{nt} \EE \left|\left|\sum\limits_{i=1}^{n} \varepsilon_i \X_{i} \right|\right|_{\infty}
 \le \frac{8Lr}{t\sqrt{n}} \sqrt{k_n } s_n \sqrt{\ln(p_n \vee 2)} .
\end{align*}
Similarly for $S_2(r)$, using Cauchy-Schwarz inequality, $||\vv_{\pi}||_{2} \le ||\vv_{s^*}||_{2}$ which is valid for $\pi \subseteq s^*$, definition of $l_2$ norm and inequality $\EE |Z| \le \sqrt{\EE Z^2} \le \sigma$ for $Z \sim Subg(\sigma^2)$, we obtain:   
\begin{align*}
\PP(S_2(r) \ge t)
&\le \frac{4L}{nt} \EE \sup \limits_{\bb\in D_2\colon  \bb - \betab^* \in B_2(r)} \left| \sum\limits_{i=1}^{n} \varepsilon_i\X_i^T(\bb - \betab^*)\right| \\
&\le \frac{4Lr}{nt} \EE \max\limits_{\pi\subseteq s^*} \left|\left|\sum\limits_{i=1}^{n} \varepsilon_i \X_{i,\pi} \right|\right|_2 
 \le \frac{4Lr}{nt} \EE \left|\left|\sum\limits_{i=1}^{n} \varepsilon_i \X_{i,s^*} \right|\right|_2 \\
&\le  \frac{4Lr}{nt} \sqrt{\EE \left|\left|\sum\limits_{i=1}^{n} \varepsilon_i \X_{i,s^*} \right|\right|_2^2} 
 = \frac{4Lr}{nt}\sqrt{ \sum\limits_{j\in s^*}\EE \left(\sum\limits_{i=1}^{n} \varepsilon_i X_{ij} \right)^2 } \\
&\le \frac{4Lr}{\sqrt{n}t} \sqrt{|s^*|} s_n.
\end{align*}
\end{proof}

\section{Properties of Lasso for a general loss function and random predictors}\label{section_lasso_general}
The main Theorem in this section is Theorem \ref{th_lasso_beta_min_property_general_loss}. Idea of the proof is based on fact that if $S(r)$ defined in (\ref{eq_lasso_S}) is sufficiently small, then $\hat{\betab}_L$ lies in a ball $\{\Deltab \in \RR^{p_n} \colon ~ ||\Deltab - \betab^*||_1 \le r\}$ (see Lemma \ref{lemma_lasso_oracle_inclusion}). Using
a tail inequality for $S(r)$ proved in Lemma \ref{lemma_lasso_S_tail} we obtain Theorem \ref{th_lasso_beta_min_property_general_loss}.  Convexity of  $\rho(\cdot,y)$ below is understood as convexity for both $y=0,1$.
\begin{lemma}\label{lemma_lasso_oracle_inclusion}
Let $\rho(\cdot,y)$ be convex function and assume that $\lambda >0.$ Moreover, assume margin condition \ref{assumpt_lasso_margin_condition} with constants $\vartheta,\epsilon,\delta>0$ and some non-negative definite matrix $\HH\in\RR^{p_n\times p_n}$.
If for some $r\in (0,\delta]$ we have $S(r) \le \bar{C} \lambda r$ and $2|s^*|\lambda \le \kappa_{\HH}(\varepsilon)\vartheta\tilde{C} r$, where $\bar{C} = \varepsilon/(8+2\varepsilon) $ and $\tilde{C} = 2/(4+\varepsilon),$ then 
$$||\hat{\betab}_L - \betab^*||_1 \le r.$$
\end{lemma}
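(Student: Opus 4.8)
The plan is to exploit the convexity device already packaged in the Basic Inequality (Lemma~\ref{lemma_basic_inequality}). Its point is that, although $\hat{\betab}_L$ itself may be far from $\betab^*$ (so that the uniform deviation control $S(r)$ cannot be applied to it directly), the convex combination $\vv = u\hat{\betab}_L + (1-u)\betab^*$ with $u = r/(r + \|\hat{\betab}_L - \betab^*\|_1)$ always satisfies $\vv - \betab^* = u(\hat{\betab}_L - \betab^*)$, hence $\|\vv - \betab^*\|_1 = r\|\hat{\betab}_L - \betab^*\|_1/(r + \|\hat{\betab}_L - \betab^*\|_1) \le r$, so $\vv - \betab^* \in B_1(r) \subseteq B_1(\delta)$. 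The explicit formula for $\|\vv - \betab^*\|_1$ also shows that the assertion $\|\hat{\betab}_L - \betab^*\|_1 \le r$ is equivalent to $\|\vv - \betab^*\|_1 \le r/2$; indeed, writing $a := \|\hat{\betab}_L - \betab^*\|_1$, one has $ra/(r+a) \le r/2$ iff $a \le r$. Thus it suffices to prove $\|\vv - \betab^*\|_1 \le r/2$, and I would set $\Deltab := \vv - \betab^*$ throughout.

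Feeding $\vv - \betab^* \in B_1(r)$ into Lemma~\ref{lemma_basic_inequality} gives $W(\vv) + \lambda\|\Deltab\|_1 \le S(r) + 2\lambda\|\Deltab_{s^*}\|_1$, and I would split according to whether $\Deltab$ lies in the cone $\mathcal{C}_\varepsilon$ or not. If $\Deltab \notin \mathcal{C}_\varepsilon$, I discard the nonnegative term $W(\vv) \ge 0$, use $\|\Deltab\|_1 = \|\Deltab_{s^*}\|_1 + \|\Deltab_{s^{*c}}\|_1$ together with $S(r) \le \bar{C}\lambda r$ to get $\|\Deltab_{s^{*c}}\|_1 \le \bar{C} r + \|\Deltab_{s^*}\|_1$; combining this with the failure of the cone inequality, $\|\Deltab_{s^{*c}}\|_1 > (3+\varepsilon)\|\Deltab_{s^*}\|_1$, bounds $\|\Deltab_{s^*}\|_1 \le \bar{C}r/(2+\varepsilon)$ and then $\|\Deltab\|_1 \le \bar{C}r(4+\varepsilon)/(2+\varepsilon) = \varepsilon r/(2(2+\varepsilon)) < r/2$, using $\bar{C} = \varepsilon/(2(4+\varepsilon))$. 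So the off-cone case yields the target bound for free.

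The substantive case is $\Deltab \in \mathcal{C}_\varepsilon$ (the case $\Deltab = 0$ being immediate). Here $\Deltab \in \mathcal{C}_\varepsilon \cap B_1(\delta)$, so the margin condition \ref{assumpt_lasso_margin_condition} applies and, combined with the definition of $\kappa_{\HH}(\varepsilon)$, yields $W(\vv) \ge \tfrac{\vartheta}{2}\Deltab^T\HH\Deltab \ge \tfrac{\vartheta}{2}\kappa_{\HH}(\varepsilon)\|\Deltab\|_2^2$. Inserting this into the Basic Inequality and bounding the right-hand side via $\|\Deltab_{s^*}\|_1 \le \sqrt{|s^*|}\|\Deltab\|_2$ gives $\tfrac{\vartheta\kappa_{\HH}(\varepsilon)}{2}\|\Deltab\|_2^2 + \lambda\|\Deltab\|_1 \le \bar{C}\lambda r + 2\lambda\sqrt{|s^*|}\|\Deltab\|_2$. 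I would absorb the cross term by Young's inequality $2\lambda\sqrt{|s^*|}\|\Deltab\|_2 \le \tfrac{\vartheta\kappa_{\HH}(\varepsilon)}{2}\|\Deltab\|_2^2 + \tfrac{2\lambda^2|s^*|}{\vartheta\kappa_{\HH}(\varepsilon)}$, cancel the quadratic term on both sides, and divide by $\lambda$, obtaining $\|\Deltab\|_1 \le \bar{C}r + \tfrac{2\lambda|s^*|}{\vartheta\kappa_{\HH}(\varepsilon)}$. The second hypothesis $2|s^*|\lambda \le \kappa_{\HH}(\varepsilon)\vartheta\tilde{C}r$ then bounds the last term by $\tilde{C}r$, so $\|\Deltab\|_1 \le (\bar{C} + \tilde{C})r$, and the constants are engineered precisely so that $\bar{C} + \tilde{C} = \varepsilon/(2(4+\varepsilon)) + 2/(4+\varepsilon) = 1/2$. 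Both cases therefore give $\|\vv - \betab^*\|_1 \le r/2$, which by the first paragraph is equivalent to the claim. The main obstacle is essentially bookkeeping: keeping the cone/off-cone dichotomy clean and checking that the two prescribed constants $\bar{C},\tilde{C}$ close exactly to $1/2$; the only genuinely analytic input is the margin lower bound on $W(\vv)$ through $\kappa_{\HH}(\varepsilon)$, everything else being norm algebra and a single use of Young's inequality.
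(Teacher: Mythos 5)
Your proof is correct, and it shares the paper's skeleton: the reduction of the claim to $\|\vv-\betab^*\|_1\le r/2$ via monotonicity of $x\mapsto rx/(r+x)$, the Basic Inequality (Lemma \ref{lemma_basic_inequality}), and, in the substantive case, the margin condition combined with $\kappa_{\HH}(\varepsilon)$ and a Young-type inequality, with the constants closing as $\bar{C}+\tilde{C}=1/2$ (the paper applies $ab\le ca^2/4+b^2/c$ to $\sqrt{\Deltab^T\HH\Deltab}$, you apply Young's inequality to $\|\Deltab\|_2$ after invoking $\kappa_{\HH}(\varepsilon)$; the resulting bound $\|\Deltab\|_1\le S(r)/\lambda+2|s^*|\lambda/(\vartheta\kappa_{\HH}(\varepsilon))$ is the same). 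What you do genuinely differently is the case decomposition. The paper dichotomizes on the size of $\|\vv_{s^*}-\betab^*_{s^*}\|_1$ relative to $Cr$, $C=1/(4+\varepsilon)$: if small, the Basic Inequality with $W(\vv)\ge 0$ gives $\|\Deltab\|_1\le\bar{C}r+2Cr=r/2$ in one line; if large, cone membership of $\Deltab$ is derived by a short contradiction argument ($\|\Deltab_{s^{*c}}\|_1\ge(1-C)r$ together with $\|\Deltab_{s^*}\|_1>Cr$ would force $\|\Deltab\|_1>r$, contradicting $\Deltab\in B_1(r)$), and only then does the margin machinery run. You dichotomize directly on whether $\Deltab\in\mathcal{C}_\varepsilon$: off the cone, the reversed cone inequality combined with the Basic Inequality gives $\|\Deltab_{s^*}\|_1<\bar{C}r/(2+\varepsilon)$ and hence $\|\Deltab\|_1\le\varepsilon r/(2(2+\varepsilon))<r/2$; on the cone, condition \ref{assumpt_lasso_margin_condition} applies verbatim. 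Your split is arguably the more natural one, since the margin condition is formulated on the cone, and it eliminates the contradiction step; the paper's split buys an easy case whose constants are engineered to hit $r/2$ exactly. One shared caveat: both arguments divide by $\kappa_{\HH}(\varepsilon)$ and so tacitly assume $\kappa_{\HH}(\varepsilon)>0$; this is harmless, because if $\kappa_{\HH}(\varepsilon)=0$ the hypothesis $2|s^*|\lambda\le\kappa_{\HH}(\varepsilon)\vartheta\tilde{C}r$ with $\lambda>0$ forces $s^*=\emptyset$, and then the Basic Inequality alone yields $\|\Deltab\|_1\le\bar{C}r<r/2$.
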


\begin{proof}
Let $u$ and $\vv$ be defined as in Lemma \ref{lemma_basic_inequality}.
Observe that $||\vv - \betab^*||_1 \le r/2$ is equivalent to $||\hat{\betab}_L - \betab^*||_1 \le r,$ as the function $f(x) = rx/(x+r)$ is increasing, $f(r) = r/2$ and $f(||\hat{\betab}_L - \betab^*||_1) = ||\vv-\betab^*||_1$. Let $C = 1/(4+\varepsilon).$ We consider two cases:\\
(i) $||\vv_{s^*} - \betab^*_{s^*}||_1 \le Cr$:\\
In this case from the basic inequality (Lemma \ref{lemma_basic_inequality}) we have:
$$||\vv - \betab^*||_1 \le \lambda^{-1} (W(\vv) + \lambda ||\vv - \betab^*||_1) \le \lambda^{-1}S(r) + 2 ||\vv_{s^*} - \betab^*_{s^*}||_1 \le \bar{C}  r + 2Cr = \frac{r}{2}.$$
(ii) $||\vv_{s^*} - \betab^*_{s^*}||_1 > Cr$:\\
Note  that $||\vv_{s^{*c}}||_1 <(1-C)r,$ otherwise we would have $||\vv - \betab^*||_1>r$ which contradicts (\ref{lemma_basic_inequality_intermediate_point_lies_in_ball}) in proof of Lemma 1 (see Appendix).\\
Now we observe that $\vv - \betab^* \in \mathcal{C}_{\varepsilon},$ as we have from definition of $C$ and assumption for this case:
$$||\vv_{s^{*c}}||_1 <(1-C)r = (3+\varepsilon)Cr  < (3+\varepsilon) ||\vv_{s^*} - \betab^*_{s^*}||_1.$$
By inequality between $l_1$ and $l_2$ norms, definition of $\kappa_{\HH}(\varepsilon),$ inequality $ca^2/4+b^2/c\ge ab$ and margin condition \ref{assumpt_lasso_margin_condition} (which holds because $\vv - \betab^*\in B_1(r)\subseteq B_1(\delta)$ in view of (\ref{lemma_basic_inequality_intermediate_point_lies_in_ball})) we conclude that:
\begin{align}
||\vv_{s^*} - \betab^*_{s^*}||_1 & \le   \sqrt{|s^*|} ||\vv_{s^*} - \betab^*_{s^*}||_2 \le \sqrt{|s^*|} ||\vv - \betab^*||_2 \\
&\le   \sqrt{|s^*|} \sqrt{\frac{(\vv - \betab^*)^T \HH (\vv - \betab^*)}{\kappa_{\HH}(\varepsilon)}} \nonumber \\
& \le  \frac{\vartheta (\vv - \betab^*)^T \HH (\vv - \betab^*)}{4\lambda} + \frac{|s^*|\lambda}{\vartheta\kappa_{\HH}(\varepsilon)} \le \frac{W(\vv)}{2\lambda} + \frac{|s^*|\lambda}{\vartheta\kappa_{\HH}(\varepsilon)}. \label{lemma_lasso_oracle_inclusion_ineq1}
\end{align}
Hence from the basic inequality (Lemma \ref{lemma_basic_inequality}) and  inequality  above it follows that:
$$ W(\vv) + \lambda ||\vv - \betab^*||_1  \le S(r) + 2\lambda ||\vv_{s^*} - \betab^*_{s^*}||_1 \le S(r) + W(\vv) + \frac{2|s^*|\lambda^2}{\vartheta\kappa_{\HH}(\varepsilon)}.$$
Subtracting $W(\vv)$ from both sides of above inequality, using assumption on $S,$  the bound on $|s^*|$ and definition of $\tilde{C}$ yields:
$$||\vv - \betab^*||_1 \le \frac{S(r)}{\lambda} + \frac{2|s^*|\lambda}{\vartheta\kappa_{\HH}(\varepsilon)} \le \bar{C} r + \frac{2|s^*|\lambda}{\vartheta\kappa_{\HH}(\varepsilon)} \le (\bar{C} + \tilde{C})r = \frac{r}{2}.$$
\end{proof}

\begin{theorem}\label{th_lasso_beta_min_property_general_loss}
Let $\rho(\cdot,y)$ be convex function for all $y$ and satisfy Lipschitz condition \ref{assumpt_lasso_rho_lipschitz}. Assume that $X_{ij}\sim Subg(\sigma_{jn}^2)$, $\betab^*$  exists and is unique, margin condition \ref{assumpt_lasso_margin_condition} is satisfied for $\varepsilon,\delta,\vartheta>0$, non-negative definite matrix $\HH\in\RR^{p_n\times p_n}$ and let
$$\frac{2|s^*|\lambda}{\vartheta\kappa_{\HH}(\varepsilon)} \le \tilde{C} \min\left\{\frac{\beta^*_{min}}{2}, \delta\right\},$$
where $\tilde{C} = 2/(4+\varepsilon).$
Then:
$$\PP\left(||\hat{\betab}_L - \betab^*||_1 \le \frac{\beta^*_{min}}{2} \right) \ge 1 - \frac{8(8+2\varepsilon) L s_n \sqrt{\log(p_n\vee 2)}}{\varepsilon \lambda \sqrt{n}} .$$
\end{theorem}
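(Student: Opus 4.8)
The plan is to reduce the statement to the deterministic inclusion of Lemma~\ref{lemma_lasso_oracle_inclusion}, evaluated at one cleverly chosen radius, and then to bound the single random ingredient---the event that $S(r)$ is small---using part~\ref{lemma_lasso_S_tail_p1} of Lemma~\ref{lemma_lasso_S_tail}. Once those two lemmas are in hand, everything is a matter of matching constants.

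First I would take $r = \min\{\beta^*_{min}/2,\delta\}$. Since $\beta^*_{min}>0$ and $\delta>0$, this $r$ lies in $(0,\delta]$, so it is an admissible radius for Lemma~\ref{lemma_lasso_oracle_inclusion}. The hypothesis $\frac{2|s^*|\lambda}{\vartheta\kappa_{\HH}(\varepsilon)} \le \tilde{C}\min\{\beta^*_{min}/2,\delta\}$ is, after clearing denominators, precisely the second premise $2|s^*|\lambda \le \kappa_{\HH}(\varepsilon)\vartheta\tilde{C}r$ of that lemma. Consequently, on the event $A = \{S(r)\le \bar{C}\lambda r\}$ both premises of Lemma~\ref{lemma_lasso_oracle_inclusion} hold, and I conclude $||\hat{\betab}_L-\betab^*||_1 \le r \le \beta^*_{min}/2$. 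Monotonicity of probability then gives $\PP(||\hat{\betab}_L-\betab^*||_1 \le \beta^*_{min}/2) \ge \PP(A)$.

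It remains to lower bound $\PP(A)$, i.e. to upper bound $\PP(S(r)>\bar{C}\lambda r)$. Applying part~\ref{lemma_lasso_S_tail_p1} of Lemma~\ref{lemma_lasso_S_tail} with $t=\bar{C}\lambda r$ gives
$$\PP(S(r)>\bar{C}\lambda r) \le \frac{8Lrs_n\sqrt{\log(p_n\vee 2)}}{\bar{C}\lambda r\sqrt{n}} = \frac{8Ls_n\sqrt{\log(p_n\vee 2)}}{\bar{C}\lambda\sqrt{n}}.$$
The key feature is that $r$ cancels, so the radius leaves no trace in the probability bound. Substituting $1/\bar{C} = (8+2\varepsilon)/\varepsilon$ reproduces exactly the stated constant $\frac{8(8+2\varepsilon)Ls_n\sqrt{\log(p_n\vee 2)}}{\varepsilon\lambda\sqrt{n}}$, and the theorem follows.

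The hard part is essentially nonexistent at this level: all the substance---the cone geometry behind Lemma~\ref{lemma_lasso_oracle_inclusion} and the symmetrization plus Talagrand--Ledoux contraction behind Lemma~\ref{lemma_lasso_S_tail}---has already been discharged. The only points to watch are bookkeeping: that intersecting with $\delta$ (rather than using $\beta^*_{min}/2$ outright) is what secures $r\le\delta$ and hence applicability of Lemma~\ref{lemma_lasso_oracle_inclusion}, and that the cancellation of $r$ in the tail bound is what makes the final probability independent of both $\beta^*_{min}$ and $\delta$.
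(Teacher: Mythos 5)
Your proof is correct and follows essentially the same route as the paper's: both set $r=\min\{\beta^*_{min}/2,\delta\}$, use the theorem's hypothesis to discharge the deterministic premise of Lemma~\ref{lemma_lasso_oracle_inclusion} so that $\{\|\hat{\betab}_L-\betab^*\|_1>r\}\subseteq\{S(r)>\bar{C}\lambda r\}$, and then invoke part~\ref{lemma_lasso_S_tail_p1} of Lemma~\ref{lemma_lasso_S_tail} with $t=\bar{C}\lambda r$, where the cancellation of $r$ and $1/\bar{C}=(8+2\varepsilon)/\varepsilon$ yield the stated constant. No gaps; your bookkeeping matches the paper's argument exactly.
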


\begin{proof}
Let:
$$m = \min\left\{\frac{\beta^*_{min}}{2}, \delta\right\} .$$
Lemmas \ref{lemma_lasso_oracle_inclusion} and \ref{lemma_lasso_S_tail} imply that:
\begin{align*}
\PP\left( ||\hat{\betab}_L - \betab^*||_1 > \frac{\beta_{min}^*}{2}\right) & \le \PP\left( ||\hat{\betab}_L - \betab^*||_1 > m\right) \le  \PP\left(S\left(m\right) > \bar{C} \lambda m\right)  \\
& \le \frac{8(8+2\varepsilon)Ls_n\sqrt{\log (p_n \vee 2)}}{\varepsilon\lambda \sqrt{n}}.
\end{align*}
\end{proof}

\begin{corollary}\label{coro_lasso_separation_property} (Separation property) 
If assumptions of Theorem \ref{th_lasso_beta_min_property_general_loss} are satisfied, $\log p_n = o(\lambda^2 n)$ and $\kappa_{\HH}(\varepsilon) > d$ for some $d,\varepsilon>0$ for large $n$, $|s^*|\lambda=o(\min\{\beta^*_{min},1\}), $ then 
$$\PP\left(||\hat{\betab}_L - \betab^*||_1 \le \frac{\beta_{min}^*}{2}\right)\rightarrow 1.$$ 
Moreover 
$$\PP\left(\max\limits_{i\in s^{*c}} |\hat{\beta}_{L,i}|\, \le \,\min\limits_{i\in s^{*}} |\hat{\beta}_{L,i}|\right) \rightarrow 1.$$
\end{corollary}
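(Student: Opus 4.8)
The plan is to obtain both claims directly from Theorem \ref{th_lasso_beta_min_property_general_loss}: the first by verifying its hypothesis asymptotically and letting its probability bound converge, the second by a deterministic argument on the resulting high-probability event.

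First I would check that the numerical hypothesis of Theorem \ref{th_lasso_beta_min_property_general_loss},
\[
\frac{2|s^*|\lambda}{\vartheta\kappa_{\HH}(\varepsilon)} \le \tilde{C}\min\left\{\frac{\beta^*_{min}}{2},\delta\right\},
\]
holds for all large $n$. Since $\kappa_{\HH}(\varepsilon) > d$ eventually, the left-hand side is at most $2|s^*|\lambda/(\vartheta d)$, and the assumption $|s^*|\lambda = o(\min\{\beta^*_{min},1\})$ makes this negligible compared with $\beta^*_{min}$ and with the fixed constant $\delta$ simultaneously; hence the inequality is satisfied eventually. Theorem \ref{th_lasso_beta_min_property_general_loss} then yields
\[
\PP\left(||\hat{\betab}_L - \betab^*||_1 \le \frac{\beta^*_{min}}{2}\right) \ge 1 - \frac{8(8+2\varepsilon)Ls_n\sqrt{\log(p_n\vee 2)}}{\varepsilon\lambda\sqrt{n}}.
\]
Here $s_n$ is bounded by (\ref{subG1}) and $\log p_n = o(\lambda^2 n)$ forces $\sqrt{\log(p_n\vee 2)}/(\lambda\sqrt{n})\to 0$, so the subtracted term vanishes and the first assertion follows.

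For the separation statement I would work on the event $\mathcal{A} = \{||\hat{\betab}_L - \betab^*||_1 \le \beta^*_{min}/2\}$, using the coordinatewise bound $|\hat{\beta}_{L,i} - \beta^*_i| \le ||\hat{\betab}_L - \betab^*||_1$. For $i\in s^{*c}$ one has $\beta^*_i = 0$, whence $|\hat{\beta}_{L,i}| \le \beta^*_{min}/2$; for $i\in s^*$ the reverse triangle inequality gives $|\hat{\beta}_{L,i}| \ge |\beta^*_i| - \beta^*_{min}/2 \ge \beta^*_{min}/2$. Thus on $\mathcal{A}$ we have $\max_{i\in s^{*c}}|\hat{\beta}_{L,i}| \le \beta^*_{min}/2 \le \min_{i\in s^*}|\hat{\beta}_{L,i}|$, so $\mathcal{A}$ is contained in the target event, whose probability therefore tends to $1$ by the first part.

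The computation is routine throughout; the only step needing genuine care is the asymptotic verification of the theorem's hypothesis. There $\beta^*_{min}$ and $|s^*|$ may scale arbitrarily in $n$ while $\delta$ is fixed, so one must handle both regimes of $\min\{\beta^*_{min}/2,\delta\}$: the single assumption $|s^*|\lambda = o(\min\{\beta^*_{min},1\})$ yields simultaneously $|s^*|\lambda = o(\beta^*_{min})$ and $|s^*|\lambda = o(1)$, covering respectively the case where the minimum equals $\beta^*_{min}/2$ and the case where it equals the constant $\delta$.
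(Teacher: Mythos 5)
Your proposal is correct and follows essentially the same route as the paper: the first claim comes from Theorem \ref{th_lasso_beta_min_property_general_loss} by checking its hypothesis asymptotically and letting the probability bound vanish (using boundedness of $s_n$ and $\log p_n = o(\lambda^2 n)$), and the separation claim follows by the identical coordinatewise triangle-inequality argument on the event $\{||\hat{\betab}_L-\betab^*||_1\le \beta^*_{min}/2\}$. The only difference is one of explicitness: you spell out why $2|s^*|\lambda/(\vartheta\kappa_{\HH}(\varepsilon))\le \tilde{C}\min\{\beta^*_{min}/2,\delta\}$ holds eventually under $\kappa_{\HH}(\varepsilon)>d$ and $|s^*|\lambda = o(\min\{\beta^*_{min},1\})$, a verification the paper compresses into ``follows directly from Theorem \ref{th_lasso_beta_min_property_general_loss}.''
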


\begin{proof}
First part of the corollary follows directly from Theorem \ref{th_lasso_beta_min_property_general_loss}. Now we prove that condition $||\hat{\betab}_L - \betab^*||_1 \le \beta^*_{min}/2$ implies separation property 
$$ \max\limits_{i\in s^{*c}} |\hat{\beta}_{L,i}| \le \min\limits_{i\in s^{*}} |\hat{\beta}_{L,i}|.$$

Observe that for all $j\in\{1,\ldots,p_n\}$ we have:
\begin{equation}\label{coro_lasso_separation_property_ineq1}
\frac{\beta^*_{min}}{2} \ge ||\hat{\betab}_L - \betab^*||_1 \ge |\hat{\beta}_{L,j} - \beta^*_j|.
\end{equation}
If $j\in s^*,$ then using triangle inequality yields:
$$|\hat{\beta}_{L,j}- \beta_j^*| \ge |\beta_j^*| - |\hat{\beta}_{L,j}| \ge \beta^*_{min} - |\hat{\beta}_{L,j}|.$$
Hence from the above inequality and (\ref{coro_lasso_separation_property_ineq1}) we obtain for $j\in s^*$:
$$|\hat{\beta}_{L,j}| \ge \frac{\beta^*_{min}}{2}.$$
If $j\in s^{*c},$ then $\beta^*_j = 0$ and (\ref{coro_lasso_separation_property_ineq1}) takes the form:
$$|\hat{\beta}_{L,j}| \le \frac{\beta^*_{min}}{2} .$$
This ends the proof.
\end{proof}

\section{GIC consistency for a a general loss function and random prdictors}\label{sect_GIC_consistency}
Theorems \ref{th_GIC_consistency_supsets_inequality} and \ref{th_GIC_consistency_subsets_inequality} state probability inequalities related to behaviour of  GIC  on supersets  and on subsets of $s^*$, respectively. Corollaries \ref{coro_GIC_consistency_supsets} and \ref{coro_GIC_consistency_subsets} present asymptotic conditions for GIC consistency in the aforementioned situations. Corollary \ref{coro_SS_consistency_general} gathers conclusions of Theorem \ref{th_lasso_beta_min_property_general_loss} and Corollaries \ref{coro_lasso_separation_property}, \ref{coro_GIC_consistency_supsets} and \ref{coro_GIC_consistency_subsets} to show consistency of SS procedure (see \cite{PokarowskiMielniczuk2015}) in case of subgaussian variables.

\begin{theorem}\label{th_GIC_consistency_supsets_inequality}
Assume that $\rho(\cdot,y)$ is convex, Lipschitz function with constant $L>0$, $X_{ij}\sim Subg(\sigma_{jn}^2),$ condition $C_{\epsilon}(w)$ holds for some $\epsilon,\theta>0$ and for every $w\subseteq\{1,\ldots,p_n\}$ such that $|w|\le k_n$.  
Then for any $r<\epsilon$ we have:
\begin{equation}\label{th_GIC_cosistency_supsets_inequality_ineq}
\PP(\min\limits_{w\in \mathcal{M}: s^*\subset w} GIC(w) \le GIC(s^*)) \le \frac{8L\sqrt{k_n} s_n\sqrt{\ln(p_n \vee 2)}}{\sqrt{n}} \left( \frac{rn}{a_n}  
+ \frac{4}{\theta r} \right).
\end{equation}
\end{theorem}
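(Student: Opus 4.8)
The plan is to show that every strict superset $w$ of $s^*$ in $\mathcal{M}$ has strictly larger GIC than $s^*$ with high probability, by controlling the empirical gain from enlarging the model through the oscillation term $S_1(r)$ together with the quadratic lower bound \ref{eq_GIC_margin_condition}. I would start from the identity $GIC(w)-GIC(s^*)=n[W_n(\hat{\betab}(w))-W_n(\hat{\betab}(s^*))]+a_n(|w|-|s^*|)$ and use two one-sided facts that hold deterministically: since $\betab^*$ is feasible for the $s^*$-constrained minimization, $W_n(\hat{\betab}(s^*))\le 0$; and since $\betab^*$ is the global minimizer of $R$, we have $W(\hat{\betab}(w))\ge 0$. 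Combining these with $|w|-|s^*|\ge 1$ gives $GIC(w)-GIC(s^*)\ge n W_n(\hat{\betab}(w))+a_n\ge -n\,|W_n(\hat{\betab}(w))-W(\hat{\betab}(w))|+a_n$, so it suffices to control the single random quantity $|W_n-W|$ evaluated at the estimators $\hat{\betab}(w)$, uniformly over the admissible supersets.

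The key step is to replace $|W_n(\hat{\betab}(w))-W(\hat{\betab}(w))|$ by $S_1(r)$, which requires $\hat{\betab}(w)-\betab^*\in B_2(r)$; the membership $\hat{\betab}(w)\in D_1$ is automatic from the definition of $D_1$. To establish the $B_2(r)$ containment uniformly I would argue by convexity: if $\|\hat{\betab}(w)-\betab^*\|_2>r$, slide along the segment to the point $\gamma=u\hat{\betab}(w)+(1-u)\betab^*$ with $\|\gamma-\betab^*\|_2=r/2$. Convexity of $R_n$ together with $R_n(\hat{\betab}(w))\le R_n(\betab^*)$ (feasibility of $\betab^*$, as $\supp\betab^*=s^*\subseteq w$) gives $W_n(\gamma)\le 0$, while \ref{eq_GIC_margin_condition} (applicable since $\supp\gamma\subseteq w$ and $r/2<\epsilon$) gives $W(\gamma)\ge \theta\|\gamma-\betab^*\|_2^2=\theta r^2/4$. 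As $\gamma\in D_1$ and $\gamma-\betab^*\in B_2(r)$, this forces $S_1(r)\ge |W_n(\gamma)-W(\gamma)|\ge \theta r^2/4$. Contrapositively, on the event $\{S_1(r)<\theta r^2/4\}$ every $\hat{\betab}(w)$ with $s^*\subset w\in\mathcal{M}$ and $|w|\le k_n$ satisfies $\|\hat{\betab}(w)-\betab^*\|_2\le r$ simultaneously.

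On that event $|W_n(\hat{\betab}(w))-W(\hat{\betab}(w))|\le S_1(r)$ for all such $w$, so the inequality from the first step shows that $GIC(w)\le GIC(s^*)$ can occur only if $S_1(r)\ge a_n/n$. A union bound then yields $\PP(\min_{w:s^*\subset w}GIC(w)\le GIC(s^*))\le \PP(S_1(r)\ge a_n/n)+\PP(S_1(r)\ge \theta r^2/4)$, and applying the tail bound of Lemma \ref{lemma_lasso_S_tail}\ref{lemma_lasso_S_tail_p2} with thresholds $a_n/n$ and $\theta r^2/4$ produces exactly the two summands $\tfrac{rn}{a_n}$ and $\tfrac{4}{\theta r}$ multiplying the common factor $8L\sqrt{k_n}\,s_n\sqrt{\ln(p_n\vee 2)}/\sqrt{n}$.

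I expect the main obstacle to be the uniform containment over the data-dependent family $\{w\in\mathcal{M}:s^*\subset w\}$: naively one would need a separate deviation bound for each random $w$. The device that resolves this is the definition of $S_1(r)$ as a supremum over $D_1$, which already ranges over all admissible supersets at once, so a single scalar event $\{S_1(r)<\theta r^2/4\}$ controls every $\hat{\betab}(w)$; the convexity reduction to the interior point $\gamma$ is what lets \ref{eq_GIC_margin_condition} be invoked without assuming a priori that $\hat{\betab}(w)$ lies within distance $\epsilon$ of $\betab^*$. The only free parameter is $r<\epsilon$, which is left unoptimized here and tuned in the subsequent consistency corollary.
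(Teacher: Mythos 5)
Your proof is correct and follows essentially the same route as the paper's: the same case split between the event $\{S_1(r)\ge \theta r^2/4\}$ (handled by the convexity/interpolation argument showing that any $\hat{\betab}(w)$ escaping $B_2(r)+\betab^*$ forces a large oscillation, using $C_{\epsilon}(w)$, $R_n(\hat{\betab}(w))\le R_n(\betab^*)$ and $\supp\gamma\subseteq w$) and the complementary event on which $GIC(w)\le GIC(s^*)$ forces $S_1(r)\ge a_n/n$, followed by the identical union bound and application of Lemma \ref{lemma_lasso_S_tail}, part \ref{lemma_lasso_S_tail_p2}, with thresholds $a_n/n$ and $\theta r^2/4$. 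The only cosmetic differences are your $W$/$W_n$ bookkeeping in place of the paper's explicit $R_n$/$R$ chain and your choice of the interpolation point at exact distance $r/2$ instead of the paper's $u=r/(r+\|\hat{\betab}(w)-\betab^*\|_2)$, which yields the same bounds.
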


\begin{proof}
If $s^*\subset w \in \mathcal{M}$ and $\hat{\betab}(w) - \betab^* \in B_2(r)$ then in view of inequalities $R_n(\hat{\betab}(s^*)) \le R_n(\betab^*)$ and $R(\betab^*) \le R(\bb)$  we have:
\begin{align*}
R_n(\hat{\betab}(s^*)) -  R_n(\hat{\betab}(w)) & \le \sup\limits_{\bb\in D_1\colon  \bb - \betab^* \in B_2(r)} (R_n(\betab^*) - R_n(\bb)) \\
& \le  \sup\limits_{\bb\in D_1\colon  \bb - \betab^* \in B_2(r)} ((R_n(\betab^*) - R(\betab^*)) - (R_n(\bb) - R(\bb))) \\
& \le \sup\limits_{\bb\in D_1\colon  \bb - \betab^* \in B_2(r)} |R_n(\bb) - R(\bb) - (R_n(\betab^*) - R(\betab^*))| \\ 
& = S_1(r).
\end{align*}
Note that $a_n(|w|-|s^*|)\ge a_n.$ 
Hence, if we have for some $w\supset s^*$: $GIC(w) \le GIC(s^*)$ then we obtain $nR_n(\hat{\betab}(s^*)) - nR_n(\hat{\betab}(w))) \ge a_n(|w|-|s^*|)$ and from the above inequality we have $S_1(r)\ge {a_n}/{n}$.
Furthermore, if $\hat{\betab}(w) - \betab^* \in  B_2(r)^c$ and $r<\epsilon,$ then consider:
\begin{equation*}
\vv = u \hat{\betab}(w) + (1-u) \betab^*,
\end{equation*}
where $u = r/(r + ||\hat{\betab}(w) - \betab^*||_2)$. Then
\begin{equation*}
||\vv - \betab^*||_2 = u||\hat{\betab}(w) - \betab^*||_2 = r\cdot \frac{ ||\hat{\betab}(w) - \betab^*||_2}{r + ||\hat{\betab}(w) - \betab^*||_2} \ge \frac{r}{2},
\end{equation*}
as function $x/(x+r)$ is increasing with respect to $x$ for $x>0$. Moreover, we have $||\vv - \betab^*||_2 \le r <\epsilon$. Hence, in view of $C_{\epsilon}(w)$ condition we get:
\begin{equation*}
R(\vv) - R(\betab^*) \ge \theta ||\vv - \betab^*||_2^2 \ge \frac{\theta r^2}{4}. 
\end{equation*}
From convexity of $R_n$ we have:
\begin{equation*}
R_n(\vv) \le u(R_n(\hat{\betab}(w)) - R_n(\betab^*)) + R_n(\betab^*) \le R_n(\betab^*).
\end{equation*}
We observe that $\supp \vv \subseteq \supp \hat{\betab}(w) \cup \supp \betab^* \subseteq w$, hence $\vv\in D_1$. Finally, we have:
\begin{align*}
S_1(r) \ge R_n(\betab^*) - R(\betab^*) - (R_n(\vv) - R(\vv)) \ge R(\vv) - R(\betab^*) \ge \frac{\theta r^2}{4}.
\end{align*}
Hence we obtain the following sequence of inequalities:
\begin{multline*}
\PP(\min\limits_{w\in \mathcal{M}: s^*\subset w} GIC(w) \le GIC(s^*)) \\
\le \PP(S_1(r) \ge \frac{a_n}{n}, \forall w\in \mathcal{M}\colon ~ \hat{\betab}(w)-\betab^* \in B_2(r) )\\
+ \PP(\exists w\in \mathcal{M}: s^*\subset w \wedge \hat{\betab}(w)-\betab^* \in B_2(r)^c) 
\le \PP(S_1(r) \ge \frac{a_n}{n} ) + \PP(S_1(r) \ge \frac{\theta r^2}{4}) \\
\le \frac{8Lr\sqrt{n}}{a_n} \sqrt{k_n } s_n \sqrt{\ln(p_n \vee 2)} + \frac{32L}{\theta r\sqrt{n}} \sqrt{k_n } s_n \sqrt{\ln(p_n \vee 2)}.
\end{multline*}
\end{proof}
\begin{corollary}\label{coro_GIC_consistency_supsets}
Assume that $\rho(\cdot,y)$ is convex, Lipschitz function with constant $L>0$, $X_{ij}\sim Subg(\sigma_{jn}^2),$ condition $C_{\epsilon}(w)$ holds for some $\epsilon,\theta>0$ and  for every $w\subseteq\{1,\ldots,p_n\}$ such that $|w|\le k_n$, $k_n\ln (p_n\vee 2) = o(n)$ and $k_n\ln (p_n\vee 2) = o(a_n)$. Then we have
$$\PP(\min\limits_{w\in \mathcal{M}: s^*\subset w} GIC(w) \le GIC(s^*)) \rightarrow 0.$$
\end{corollary}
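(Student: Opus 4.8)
The plan is to take the finite-sample bound of Theorem \ref{th_GIC_consistency_supsets_inequality}, which holds for \emph{every} $r\in(0,\epsilon)$, and to choose $r=r_n$ so that its right-hand side vanishes. First I would record that $s_n$ is bounded: by assumption (\ref{subG1}) we have $\limsup_n s_n^2<\infty$, so $s_n\le C$ for some constant $C$ and all large $n$. It is convenient to abbreviate $A_n = 8L s_n\sqrt{k_n\ln(p_n\vee 2)/n}$, so that the bound of Theorem \ref{th_GIC_consistency_supsets_inequality} reads
$$\PP\bigl(\min_{w\in\mathcal{M}:\,s^*\subset w}GIC(w)\le GIC(s^*)\bigr)\le A_n\Bigl(\frac{rn}{a_n}+\frac{4}{\theta r}\Bigr).$$
Since $s_n\le C$, the assumption $k_n\ln(p_n\vee 2)=o(n)$ already gives $A_n\to 0$.

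The key step is to balance the two terms inside the bracket. Viewing $g(r)=rn/a_n+4/(\theta r)$ as a function of $r>0$, it is minimized at $r^*=2\sqrt{a_n/(\theta n)}$, where $g(r^*)=(4/\sqrt{\theta})\sqrt{n/a_n}$. Substituting this choice I would obtain
$$A_n\,g(r^*)=\frac{32L s_n}{\sqrt{\theta}}\sqrt{\frac{k_n\ln(p_n\vee 2)}{a_n}},$$
which tends to $0$ precisely because of the second hypothesis $k_n\ln(p_n\vee 2)=o(a_n)$ together with $s_n\le C$. This is where the two stated rates enter in their sharpest form: $o(n)$ controls the prefactor $A_n$, while $o(a_n)$ kills the balanced bracket. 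Note that a fixed constant $r$ would not suffice here — one checks that the first bracket term then behaves like $\sqrt{n\,k_n\ln(p_n\vee 2)}/a_n$, which need not vanish — so letting $r=r_n\to 0$ is essential.

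The only point requiring care is the admissibility constraint $r<\epsilon$ imposed by the theorem. When $a_n=o(n)$ — which covers the listed examples (AIC, BIC, EBIC under the standing rate assumptions, since $k_n\ln(p_n\vee 2)=o(n)$ forces $\ln p_n=o(n)$) — one has $r^*\to 0$, so $r^*<\epsilon$ for all large $n$ and the argument above applies verbatim. To cover the general case I would instead take $r_n=\min\{\epsilon/2,\,2\sqrt{a_n/(\theta n)}\}$. In the regime where the unconstrained optimizer is admissible the computation above is unchanged; in the complementary regime the cap $r_n=\epsilon/2$ binds, which forces $a_n\ge(\theta\epsilon^2/16)n$, so that $n/a_n$ is bounded and both bracket terms reduce to $A_n$ times a bounded factor, again vanishing by $k_n\ln(p_n\vee 2)=o(n)$. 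In all cases the right-hand side of the Theorem \ref{th_GIC_consistency_supsets_inequality} bound converges to $0$, giving the claim. I expect this case split to be the only mild obstacle; the substance of the result lies entirely in the probability inequality of Theorem \ref{th_GIC_consistency_supsets_inequality}.
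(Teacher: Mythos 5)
Your proof is correct and takes essentially the same route as the paper: both arguments consist of plugging a suitably chosen sequence $r_n$ into the finite-sample bound of Theorem \ref{th_GIC_consistency_supsets_inequality} and letting the hypothesis $k_n\ln(p_n\vee 2)=o(n)$ control the prefactor and $k_n\ln(p_n\vee 2)=o(a_n)$ control the bracket. The only difference is the choice of $r_n$: you take the exact minimizer $2\sqrt{a_n/(\theta n)}$ capped at $\epsilon/2$ (hence your case split to guarantee $r_n<\epsilon$), while the paper takes $r_n = C_n\sqrt{k_n\ln(p_n\vee 2)/n}$ with $C_n = (n/(k_n\ln(p_n\vee 2)))^{1/4}\min\{1,(a_n/n)^{1/4}\}$, which tends to $0$ automatically and so needs no split — both choices make the right-hand side vanish.
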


\begin{proof}
We take: $r_n = C_n \sqrt{\frac{k_n\ln(p_n\vee 2)}{n}}$, where 
$$C_n = \sqrt[4]{\frac{n}{k_n\ln(p_n \vee 2)}} \min \{1 , \sqrt[4]{\frac{a_n}{n}}\}.$$ 
We observe that $C_n \rightarrow +\infty$, $r_n \le \sqrt[4]{\frac{k_n\ln(p_n\vee 2)}{n}} \rightarrow 0$ and 
\begin{equation*}
C_n\frac{k_n\ln(p_n\vee 2)}{a_n} \le \left(\frac{k_n \ln (p_n \vee 2)}{a_n} \right)^{\frac{3}{4}} \rightarrow 0.
\end{equation*} 
In view of Theorem \ref{th_GIC_consistency_supsets_inequality} we have for sufficiently large $n$ such that $r_n<\epsilon$ holds:
\begin{align*}
\PP(\min\limits_{w\in \mathcal{M}: s^*\subset w} GIC(w) \le GIC(s^*)) &\le \frac{8L\sqrt{k_n} s_n\sqrt{\ln(p_n \vee 2)}r_n\sqrt{n}}{a_n}   \\
&\quad + \frac{32L\sqrt{k_n} s_n\sqrt{\ln(p_n \vee 2)}}{\sqrt{n}\theta r_n}  \\
&=  \frac{8LC_nk_n s_n\ln(p_n \vee 2)}{a_n} + \frac{32L s_n}{\theta C_n} \rightarrow 0.
\end{align*}
\end{proof}

The most restrictive condition of Corollary \ref{coro_GIC_consistency_supsets} is $k_n\ln (p_n \vee 2) = o(a_n)$. We note that in the case when $p_n\ge n$ and $k_n = d$, EBIC penalty defined above corresponds to the borderline of this condition.
Theorem \ref{th_GIC_consistency_subsets_inequality} is an analogue of Theorem \ref{th_GIC_consistency_supsets_inequality} for subsets of $s^*$.
\begin{theorem}\label{th_GIC_consistency_subsets_inequality}
Assume that $\rho(\cdot,y)$ is convex, Lipschitz function with constant $L>0$, $X_{ij}\sim Subg(\sigma_{jn}^2),$ condition $C_{\epsilon}(s^*)$ holds for some $\epsilon,\theta>0$ and $8a_n|s^*| \le \theta n\min\{\epsilon^2,\beta^{*2}_{min}\}$. Then we have:
$$
\PP(\min\limits_{w\in \mathcal{M}:w \subset s^*} GIC(w) \le GIC(s^*)) \le \frac{32Ls_n \sqrt{|s^*|}}{\theta \sqrt{n}\min\{\epsilon,\beta^*_{min}\}}.  
$$
\end{theorem}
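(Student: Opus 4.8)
The plan is to follow the template of the proof of Theorem~\ref{th_GIC_consistency_supsets_inequality}, but to exploit a structural simplification that is special to subsets: whenever $w\subsetneq s^*$, the constrained estimator $\hat{\betab}(w)$ is supported on $w$ and hence vanishes at some active coordinate $i\in s^*\setminus w$, so that $\|\hat{\betab}(w)-\betab^*\|_2\ge|\beta^*_i|\ge\beta^*_{min}$. Thus $\hat{\betab}(w)$ is \emph{automatically} bounded away from $\betab^*$, and---in contrast to Theorem~\ref{th_GIC_consistency_supsets_inequality}---no case distinction is needed: we are always in the regime where the estimator lies outside the relevant ball. Accordingly I would fix $r=\min\{\epsilon,\beta^*_{min}\}\le\epsilon$, so that $\|\hat{\betab}(w)-\betab^*\|_2\ge r$ for every admissible $w$.

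First I would rewrite the failure event $\{\min_{w\in\mathcal{M}:w\subset s^*}GIC(w)\le GIC(s^*)\}$ as the existence of some admissible $w\subsetneq s^*$ with $GIC(w)\le GIC(s^*)$. Since $\betab^*$ is feasible for the minimization defining $\hat{\betab}(s^*)$ we have $R_n(\hat{\betab}(s^*))\le R_n(\betab^*)$, and combining this with $|s^*|-|w|\le|s^*|$ the inequality $GIC(w)\le GIC(s^*)$ rearranges to $R_n(\hat{\betab}(w))\le R_n(\betab^*)+a_n|s^*|/n$.

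Next I would pass to the population scale through the convex-combination device of Theorem~\ref{th_GIC_consistency_supsets_inequality}. Setting $\vv=u\hat{\betab}(w)+(1-u)\betab^*$ with $u=r/(r+\|\hat{\betab}(w)-\betab^*\|_2)$, the identity $\|\vv-\betab^*\|_2=u\|\hat{\betab}(w)-\betab^*\|_2$ together with $\|\hat{\betab}(w)-\betab^*\|_2\ge r$ forces $\|\vv-\betab^*\|_2\in[r/2,r]$; moreover $\supp\vv\subseteq s^*$, so $\vv\in D_2$. Because $\|\vv-\betab^*\|_2\le r\le\epsilon$, condition~\ref{eq_GIC_margin_condition} applied with $w=s^*$ gives $R(\vv)-R(\betab^*)\ge\theta\|\vv-\betab^*\|_2^2\ge\theta r^2/4$, while convexity of $R_n$ together with the failure-event bound on $R_n(\hat{\betab}(w))$ gives $R_n(\vv)-R_n(\betab^*)\le u\,a_n|s^*|/n\le a_n|s^*|/n$.

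Subtracting, and using that $|W_n(\vv)-W(\vv)|\le S_2(r)$ (valid since $\supp\vv\subseteq s^*$ and $\|\vv-\betab^*\|_2\le r$), I would obtain $S_2(r)\ge\theta r^2/4-a_n|s^*|/n$. The hypothesis $8a_n|s^*|\le\theta n\min\{\epsilon^2,\beta^{*2}_{min}\}=\theta n r^2$ yields $a_n|s^*|/n\le\theta r^2/8$, hence $S_2(r)\ge\theta r^2/8$ on the failure event. Since $S_2(r)$ does not depend on $w$, the failure probability is at most $\PP(S_2(r)\ge\theta r^2/8)$, and part~\ref{lemma_lasso_S_tail_p3} of Lemma~\ref{lemma_lasso_S_tail} with $t=\theta r^2/8$ delivers $\frac{4Lrs_n\sqrt{|s^*|}}{(\theta r^2/8)\sqrt{n}}=\frac{32Ls_n\sqrt{|s^*|}}{\theta r\sqrt{n}}$, which is exactly the claim after substituting $r=\min\{\epsilon,\beta^*_{min}\}$. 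The only genuinely delicate point is the calibration of $r$: it must be small enough ($\le\epsilon$) for the margin condition to apply, yet large enough ($\le\beta^*_{min}$) that the automatic displacement of $\hat{\betab}(w)$ keeps us outside $B_2(r)$, and the quantitative hypothesis $8a_n|s^*|\le\theta n\min\{\epsilon^2,\beta^{*2}_{min}\}$ is precisely what guarantees that the margin gain $\theta r^2/4$ dominates the penalty slack $a_n|s^*|/n$.
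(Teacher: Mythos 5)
Your proof is correct and takes essentially the same route as the paper's: the same reduction of the event $GIC(w)\le GIC(s^*)$ via $R_n(\hat{\betab}(s^*))\le R_n(\betab^*)$, the same convex-combination device $\vv = u\hat{\betab}(w)+(1-u)\betab^*$ with the observation $||\hat{\betab}(w)-\betab^*||_2\ge \beta^*_{min}$, the same application of condition \ref{eq_GIC_margin_condition} for $w=s^*$, and the same tail bound for $S_2$ from Lemma \ref{lemma_lasso_S_tail}. The only difference is presentational: by fixing $r=\min\{\epsilon,\beta^*_{min}\}$ at the outset you merge into a single computation what the paper handles as two separate cases ($\beta^*_{min}>\epsilon$ with radius $\epsilon$, and $\beta^*_{min}\le\epsilon$ with radius $\beta^*_{min}$).
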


\begin{proof}
Suppose that for some $w\subset s^*$ we have $GIC(w) \le GIC(s^*)$. This  is equivalent to: 
\begin{equation*}
nR_n(\hat{\betab}(s^*)) - nR_n(\hat{\betab}(w)) \ge a_n(|w| - |s^*|).
\end{equation*}
In view of inequalities $R_n(\hat{\betab}(s^*)) \le R_n(\betab^*)$ and $a_n(|w|-|s^*|)\ge -a_n|s^*|$ we obtain:
\begin{equation*}
nR_n(\betab^*) - nR_n(\hat{\betab}(w)) \ge -a_n|s^*|.
\end{equation*}
Let
$\vv = u\hat{\betab}(w) + (1-u) \betab^*$
for some $u\in [0,1]$ to be specified later. From convexity of $\rho$ we consider:
\begin{equation}\label{th_GIC_consistency_subsets_inequality_1}
nR_n(\betab^*) - nR_n(\vv) \ge nu (R_n(\betab^*) - R_n(\hat{\betab}(w))) \ge - ua_n|s^*| \ge -a_n|s^*|.
\end{equation}

We consider two cases separately:

1) $\beta^*_{min} > \epsilon$.\\
First we observe
\begin{equation}\label{th_GIC_consistency_subsets_inequality_5}
8a_n |s^*| \le \theta \epsilon^2n,
\end{equation}
what follows from our assumption. Let $u = \epsilon/(\epsilon + ||\hat{\betab}(w) - \betab^*||_2)$ and
\begin{equation}\label{th_GIC_consistency_subsets_inequality_6}
\vv = u\hat{\betab}(w) + (1-u) \betab^*.
\end{equation}
Note that $||\hat{\betab}(w) - \betab^*||_2 \ge ||\betab^*_{s^*\setminus w}||_2 \ge \beta_{min}^*$. Then, as function $d(x) = x/(x+c)$ is increasing and bounded from above by $1$ for $x,c>0$, we obtain:
\begin{equation}\label{th_GIC_consistency_subsets_inequality_2}
\epsilon \ge ||\vv - \betab^*||_2 = \frac{\epsilon ||\hat{\betab}(w) - \betab^*||_2}{\epsilon + ||\hat{\betab}(w) - \betab^*||_2} 
\ge \frac{\epsilon\beta^*_{min}}{\epsilon + \beta^*_{min}} > \frac{\epsilon^2}{2\epsilon} =  \frac{\epsilon}{2}.
\end{equation}
Hence, in view of $C_{\epsilon}(s^*)$ condition we have: 
\begin{equation*}
R(\vv) - R(\betab^*) > \theta \frac{\epsilon^2}{4}.
\end{equation*}
Using (\ref{th_GIC_consistency_subsets_inequality_1})-(\ref{th_GIC_consistency_subsets_inequality_6}) and above inequality yields:
\begin{align*}
S_2(\epsilon) \ge R_n(\betab^*) - R(\betab^*) - (R_n(\vv) - R(\vv)) > \theta \frac{\epsilon^2}{4} - \frac{a_n}{n}|s^*| \ge \frac{\theta\epsilon^2}{8}.
\end{align*}
Thus, in view of Lemma \ref{lemma_lasso_S_tail}, we obtain:
\begin{equation}\label{th_GIC_consistency_subsets_inequality_3}
\PP(\min\limits_{w\in \mathcal{M}:w \subset s^*} GIC(w) \le GIC(s^*)) \le \PP\left(S_2(\epsilon) > \frac{\theta\epsilon^2}{8}\right) \le \frac{32L\sqrt{|s^*|} s_n }{\sqrt{n}\theta \epsilon}.
\end{equation} 

2) $\beta^*_{min} \le \epsilon$.\\
In this case we take $u = \beta^*_{min}/(\beta^*_{min} +  ||\hat{\betab}(w) - \betab^*||_2)$ and define $\vv$ as in (\ref{th_GIC_consistency_subsets_inequality_6}). Analogously as in (\ref{th_GIC_consistency_subsets_inequality_2}), we have:
\begin{equation*}
\frac{\beta^*_{min}}{2} \le ||\vv - \betab^*||_2 \le \beta^*_{min}. 
\end{equation*}
Hence, in view of $C_{\epsilon}(s^*)$ condition we have: 
\begin{equation*}
R(\vv) - R(\betab^*) \ge \theta \frac{\beta^{*2}_{min}}{4}.
\end{equation*}
Using (\ref{th_GIC_consistency_subsets_inequality_1}) and above inequality yields:
\begin{align*}
S_2(\beta^*_{min}) \ge R_n(\betab^*) - R(\betab^*) - (R_n(\vv) - R(\vv)) \ge \theta \frac{\beta^{*2}_{min}}{4} - \frac{a_n}{n}|s^*| \ge \frac{\theta}{8} \beta^{*2}_{min}.
\end{align*}
Thus, in view of Lemma \ref{lemma_lasso_S_tail}, we obtain:
\begin{equation}\label{th_GIC_consistency_subsets_inequality_4}
\PP(\min\limits_{w\in \mathcal{M}:w \subset s^*} GIC(w) \le GIC(s^*)) \le \PP\left(S_2(\beta^*_{min}) \ge \frac{\theta}{8} \beta^{*2}_{min}\right) \le \frac{32L\sqrt{|s^*|} s_n }{\sqrt{n}\theta\beta^*_{min}}.
\end{equation} 

By combining (\ref{th_GIC_consistency_subsets_inequality_3}) and (\ref{th_GIC_consistency_subsets_inequality_4}) the theorem follows.
\end{proof}

\begin{corollary}\label{coro_GIC_consistency_subsets}
Assume that loss $\rho(\cdot,y)$ is convex, Lipschitz function with constant $L>0$, $X_{ij}\sim Subg(\sigma_{jn}^2),$ condition $C_{\epsilon}(s^*)$ holds for some $\epsilon,\theta>0$ and $a_n|s^*| = o(n\min\{1,\beta^*_{min}\}^2)$, then
$$\PP(\min\limits_{w\in \mathcal{M}:w \subset s^*} GIC(w) \le GIC(s^*)) \rightarrow 0.$$
\end{corollary}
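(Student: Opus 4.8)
The plan is to read off the corollary from the finite-sample bound already established in Theorem \ref{th_GIC_consistency_subsets_inequality}. Concretely, I would first check that the standing hypothesis $8a_n|s^*|\le \theta n\min\{\epsilon^2,\beta^{*2}_{min}\}$ of that theorem is met for all sufficiently large $n$, so that its bound becomes applicable, and then verify that the bound itself converges to zero under the present assumption $a_n|s^*|=o(n\min\{1,\beta^*_{min}\}^2)$. The whole argument hinges on comparing the quantity $\min\{\epsilon^2,\beta^{*2}_{min}\}$, which governs both the theorem's hypothesis and its denominator, with the quantity $\min\{1,\beta^*_{min}\}^2=\min\{1,\beta^{*2}_{min}\}$ appearing in the corollary's assumption; here $\epsilon,\theta,L$ are fixed while $\beta^*_{min}$, $|s^*|$ and $a_n$ may vary with $n$.

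The elementary fact I would isolate first is that, for any $a,b>0$,
\[
\min\{1,a\}\min\{1,b\}\le \min\{a,b\},
\]
since $\min\{1,a\}\min\{1,b\}\le \min\{1,a\}\le a$ and, by symmetry, $\le b$. Applied with $a=\epsilon^2$, $b=\beta^{*2}_{min}$ this gives $\min\{\epsilon^2,\beta^{*2}_{min}\}\ge \min\{1,\epsilon^2\}\min\{1,\beta^{*2}_{min}\}$, and taking square roots also $\min\{\epsilon,\beta^*_{min}\}\ge \min\{1,\epsilon\}\min\{1,\beta^*_{min}\}$. Because $\epsilon$ is fixed, $\min\{1,\epsilon^2\}>0$, and the assumption $a_n|s^*|=o(n\min\{1,\beta^{*2}_{min}\})$ yields, for all large $n$,
\[
8a_n|s^*|\le \theta\min\{1,\epsilon^2\}\, n\min\{1,\beta^{*2}_{min}\}\le \theta n\min\{\epsilon^2,\beta^{*2}_{min}\},
\]
so the hypothesis of Theorem \ref{th_GIC_consistency_subsets_inequality} holds eventually and its conclusion
\[
\PP\Big(\min_{w\in\mathcal{M}:\,w\subset s^*}GIC(w)\le GIC(s^*)\Big)\le \frac{32Ls_n\sqrt{|s^*|}}{\theta\sqrt{n}\,\min\{\epsilon,\beta^*_{min}\}}
\]
may be invoked.

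It then remains to send this bound to zero. Using the square-root inequality above to lower-bound the denominator, the right-hand side is at most
\[
\frac{32Ls_n}{\theta\min\{1,\epsilon\}}\sqrt{\frac{|s^*|}{n\min\{1,\beta^{*2}_{min}\}}}.
\]
The prefactor is bounded, since $L,\theta,\epsilon$ are constants and $s_n$ is bounded for large $n$ by \eqref{subG1}, so everything reduces to showing $|s^*|/(n\min\{1,\beta^{*2}_{min}\})\to 0$. This is the only place where care is needed: the assumption controls the \emph{$a_n$-weighted} ratio $a_n|s^*|/(n\min\{1,\beta^{*2}_{min}\})$, not the unweighted one appearing in the bound, so I must divide out the penalty. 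For every penalty listed after \eqref{eq_GIC_def} one has $a_n\ge a_0>0$ (indeed $a_n\ge 2$ for AIC, and $a_n\to\infty$ for BIC and EBIC), and then $|s^*|/(n\min\{1,\beta^{*2}_{min}\})\le a_0^{-1}\,a_n|s^*|/(n\min\{1,\beta^{*2}_{min}\})\to 0$, which finishes the proof. The substantive point to flag is thus not any hard estimate but precisely this reconciliation of the $a_n$-weighted assumption with the unweighted quantity in the bound: the argument goes through exactly because $a_n$ is bounded away from zero, and would fail for a penalty decaying to zero.
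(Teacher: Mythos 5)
Your proof is correct and follows essentially the same route as the paper's: both reduce the corollary to the finite-sample bound of Theorem \ref{th_GIC_consistency_subsets_inequality} by dividing the penalty $a_n$ out of the assumption (the paper invokes $a_n\to\infty$, you use the weaker sufficient condition $a_n\ge a_0>0$). If anything, you are more careful than the paper's own one-line argument, which silently skips both the verification that the theorem's hypothesis $8a_n|s^*|\le\theta n\min\{\epsilon^2,\beta^{*2}_{min}\}$ holds for large $n$ and the comparison of $\min\{\epsilon,\beta^*_{min}\}$ with $\min\{1,\beta^*_{min}\}$; your inequality $\min\{1,a\}\min\{1,b\}\le\min\{a,b\}$ fills exactly these gaps.
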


\begin{proof}
First we observe as $a_n\to\infty$ 
$$a_n|s^*| = o(n\min\{1,\beta^*_{min}\}^2)$$ 
implies 
$$|s^*| = o(n\min\{1,\beta^*_{min}\}^2),$$ 
and thus in view of Theorem \ref{th_GIC_consistency_subsets_inequality} we have
$$\PP(\min\limits_{w\in \mathcal{M}:w \subset s^*} GIC(w) \le GIC(s^*)) \rightarrow 0.$$
\end{proof}

\section{Selection consistency of SS procedure}\label{sect_GIC_SS}
In this section we combine the results of the two previous sections to 
establish consistency of  a two-step SS procedure. It consists in construction of  a nested family of models ${\cal M}$ using magnitude of Lasso coefficients and then finding the minimizer of GIC over this family. As ${\cal M}$ is data-dependent, in order to establish consistency of the procedure we
use Corollaries  \ref{coro_GIC_consistency_supsets} and \ref{coro_GIC_consistency_subsets} in which the minimizer of GIC is considered over {\it all} subsets and supersets of $s^*$.

SS (Screening and Selection) procedure is defined as follows:
\begin{enumerate}
\item Choose some $\lambda>0$.
\item Find $\hat{\betab}_{L} = \argmin\limits_{\bb \in \RR^{p_n}} R_n(\bb) + \lambda ||\bb||_1$.
\item Find $\hat{s}_L = \supp \hat{\betab}_{L} = \{j_1,\ldots,j_k\}$ such that $|\hat{\beta}_{L,j_1}|\ge \ldots \ge |\hat{\beta}_{L,j_k}|>0$ and $j_1,\ldots,j_k\in\{1,\ldots,p_n\}$.
\item Define $\mathcal{M}_{SS} = \{\emptyset, \{j_1\}, \{j_1,j_2\}, \ldots, \{j_1,j_2,\ldots,j_k\}\}$.
\item Find $\hat{s}^* = \argmin\limits_{w\in \mathcal{M}_{SS}} GIC(w)$. 
\end{enumerate}

SS procedure is a modification of SOS procedure in 
\cite{Pokarowskietal} designed for GLMs for which ordering of variables is based on p-values of corresponding significance tests for them. Since additional ordering is omitted in the  proposed modification we compressed the name to SS.

Corollary \ref{coro_SS_consistency_general} and Remark \ref{remark_SS_consistency_general} describe the situations when SS procedure is selection consistent. In  it we use the assumptions imposed in Sections 2 and 3 together with an assumption that support of $s^*$ contains no more than $k_n$ elements, where $k_n$ is some  deterministic sequence of integers.

\begin{corollary}\label{coro_SS_consistency_general}
Assume that $\rho(\cdot,y)$ is convex, Lipschitz function with constant $L>0$, $X_{ij} \sim Subg(\sigma_{jn}^2)$ and $\betab^*$ exists and is unique. If $k_n \in \NN_{+}$ is some sequence, margin condition \ref{assumpt_lasso_margin_condition} is satisfied for some $\vartheta,\delta,\varepsilon>0$, condition $C_{\epsilon}(w)$ holds for some $\epsilon,\theta>0$ and for every $w\subseteq \{1,\ldots,p_n\}$ such that $|w|\le k_n$, $\mathcal{M}_{SS}$ is nested family constructed in the step 4 of SS procedure and the following conditions are fulfilled:
\begin{itemize}
\item $|s^*|\le k_n$,
\item $\PP(\forall w \in \mathcal{M}_{SS}: |w| \le k_n) \rightarrow 1$, 
\item $\liminf\limits_{n} \kappa_{\HH}(\varepsilon)>0$ for some $\varepsilon>0$, where $\HH$ is non-negative definite matrix and $\kappa_{\HH}(\varepsilon)$ is defined in (\ref{eq_lasso_kappa0}),
\item $\log(p_n) = o(n\lambda^2)$,
\item $k_n\lambda =o(\min\{\beta^*_{min},1\})$,
\item $k_n\log p_n = o(n)$,
\item $k_n\log p_n = o(a_n)$,
\item $a_nk_n = o(n \min\{\beta^*_{min},1\}^2)$,
\end{itemize}
then for SS procedure we have
\begin{equation*}
\PP(\hat{s}^* = s^*) \rightarrow 1.
\end{equation*}
\end{corollary}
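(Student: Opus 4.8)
The plan is to bound $\PP(\hat{s}^*\ne s^*)$ by a union bound over three sources of error: the Lasso failing to separate active from spurious predictors, the constructed nested family being too large, and $GIC$ preferring a proper subset or a proper superset of $s^*$. First I would introduce the events
\[
A_n = \{\,\|\hat{\betab}_L - \betab^*\|_1 \le \beta^*_{min}/2\,\},\qquad
B_n = \{\,\forall w\in\mathcal{M}_{SS}\colon |w|\le k_n\,\}.
\]
The hypotheses of Corollary~\ref{coro_lasso_separation_property} follow from the listed assumptions (margin condition \ref{assumpt_lasso_margin_condition}, $\log p_n = o(n\lambda^2)$, $\liminf_n\kappa_{\HH}(\varepsilon)>0$, and $|s^*|\lambda\le k_n\lambda = o(\min\{\beta^*_{min},1\})$ since $|s^*|\le k_n$), so Corollary~\ref{coro_lasso_separation_property} will give $\PP(A_n)\to 1$; and $\PP(B_n)\to 1$ is assumed outright.

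The structural heart of the argument will be the claim that on $A_n$ one has $s^*\in\mathcal{M}_{SS}$. To see this I would reuse the computation in the proof of Corollary~\ref{coro_lasso_separation_property}: on $A_n$ every active coordinate satisfies $|\hat{\beta}_{L,j}|\ge\beta^*_{min}/2>0$ while every spurious coordinate satisfies $|\hat{\beta}_{L,j}|\le\beta^*_{min}/2$. Hence all of $s^*$ lies in $\hat{s}_L=\supp\hat{\betab}_L$, and in the magnitude ordering $|\hat{\beta}_{L,j_1}|\ge\cdots\ge|\hat{\beta}_{L,j_k}|$ the active coordinates occupy the first $|s^*|$ positions; therefore the size-$|s^*|$ member $\{j_1,\ldots,j_{|s^*|}\}$ of $\mathcal{M}_{SS}$ equals $s^*$, so that $s^*\in\mathcal{M}_{SS}$. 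Boundary ties at the value $\beta^*_{min}/2$ are negligible and can be broken in favour of the active coordinates on an event of probability tending to one.

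Next I would convert the remaining error into the two $GIC$ comparisons already analysed. On $A_n\cap B_n$ the family $\mathcal{M}_{SS}$ is nested and contains $s^*$, so every $w\in\mathcal{M}_{SS}$ with $w\ne s^*$ is either a proper subset of $s^*$ or a proper superset with $|w|\le k_n$; consequently
\[
\{\hat{s}^*\ne s^*\}\cap A_n\cap B_n \subseteq
\Big\{\min_{w\subset s^*}GIC(w)\le GIC(s^*)\Big\}\cup
\Big\{\min_{s^*\subset w,\,|w|\le k_n}GIC(w)\le GIC(s^*)\Big\},
\]
where I have enlarged each minimum from the random family $\mathcal{M}_{SS}$ to the deterministic family of all subsets of $\{1,\ldots,p_n\}$ of cardinality at most $k_n$ (this only lowers the minima, so the inclusion is valid, and that family contains $s^*$ because $|s^*|\le k_n$). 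I would then apply Corollary~\ref{coro_GIC_consistency_supsets} to this deterministic family — whose hypotheses $C_{\epsilon}(w)$ for $|w|\le k_n$, $k_n\log(p_n\vee 2)=o(n)$ and $k_n\log(p_n\vee 2)=o(a_n)$ are exactly among the assumptions — to send the superset probability to $0$, and Corollary~\ref{coro_GIC_consistency_subsets} (using $C_{\epsilon}(s^*)$ and $a_nk_n=o(n\min\{\beta^*_{min},1\}^2)$ together with $|s^*|\le k_n$) to send the subset probability to $0$. Collecting the four contributions,
\begin{align*}
\PP(\hat{s}^*\ne s^*) &\le \PP(A_n^c) + \PP(B_n^c)\\
&\quad + \PP\Big(\min_{w\subset s^*}GIC(w)\le GIC(s^*)\Big)\\
&\quad + \PP\Big(\min_{s^*\subset w,\,|w|\le k_n}GIC(w)\le GIC(s^*)\Big)\longrightarrow 0,
\end{align*}
which would give $\PP(\hat{s}^*= s^*)\to 1$.

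I expect the genuinely delicate step to be the membership claim $s^*\in\mathcal{M}_{SS}$: because the separation in Corollary~\ref{coro_lasso_separation_property} is only a weak inequality, the main care will go into verifying that the top $|s^*|$ ordered Lasso coordinates are \emph{exactly} $s^*$ and not merely a set of the correct cardinality, and simultaneously into legitimizing the passage from the data-dependent family $\mathcal{M}_{SS}$ to the fixed family of all small subsets used by Corollaries~\ref{coro_GIC_consistency_supsets} and~\ref{coro_GIC_consistency_subsets}. Everything else is a repackaging of those already-established corollaries together with Corollary~\ref{coro_lasso_separation_property}.
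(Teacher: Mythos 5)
Your proposal is correct and takes essentially the same route as the paper's own proof: separation via Corollary~\ref{coro_lasso_separation_property} gives $\PP(s^*\in\mathcal{M}_{SS})\to 1$, the assumed size event handles members of $\mathcal{M}_{SS}$ with $|w|>k_n$, and Corollaries~\ref{coro_GIC_consistency_supsets} and~\ref{coro_GIC_consistency_subsets} eliminate proper supersets and subsets, all combined by a union bound. The only (cosmetic) differences are that you enlarge the two minima to the deterministic family of all subsets of cardinality at most $k_n$ whereas the paper applies the corollaries directly to the data-dependent family (both are legitimate, since the bounds via $S_1$ and $S_2$ in Lemma~\ref{lemma_lasso_S_tail} are uniform over such families), and that you spell out, rather than merely assert, the membership claim $s^*\in\mathcal{M}_{SS}$ together with the boundary-tie caveat that the paper glosses over.
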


\begin{proof}
In view of Corollary \ref{coro_lasso_separation_property}  it follows from separation property (\ref{coro_lasso_separation_property_ineq1}) we obtain $\PP(s^*\in \mathcal{M}_{SS}) \rightarrow 1$. Let:
\begin{gather*}
A_1 = \{\min_{w\in\mathcal{M}_{SS}: w\supset s^*, |w|\le k_n} GIC(w) \le GIC(s^*)\}, \\
A_2 = \{\min_{w\in\mathcal{M}_{SS}: w\supset s^*, |w|> k_n} GIC(w) \le GIC(s^*)\}, \\
B = \{\forall w \in \mathcal{M}_{SS}: |w| \le k_n\}.
\end{gather*} 

Then we have again from the fact that $A_2\cap B = \emptyset$, union inequality and Corollary \ref{coro_GIC_consistency_supsets}:
\begin{align}
\PP(\min_{w\in\mathcal{M}_{SS}: w\supset s^*} GIC(w) \le GIC(s^*)) &= \PP(A_1\cup A_2)= \PP(A_1 \cup (A_2\cap B^c))\nonumber\\ 
&\le  \PP(A_1) + \PP(B^c) \rightarrow 0.\label{coro_SS_consistency_general_eq_supsets}
\end{align}

In the analogous way, using $|s^*|\le k_n$ and Corollary \ref{coro_GIC_consistency_subsets} yields:
\begin{equation}\label{coro_SS_consistency_general_eq_subsets}
\PP(\min_{w\in\mathcal{M}_{SS}: w\subset s^*} GIC(w) \le GIC(s^*)) \rightarrow 0.
\end{equation}

Now, observe that in view of definition of $\hat{s}^*$ and union inequality:
\begin{multline*}
\PP(\hat{s}^*=s^*) = \PP(\min_{w\in\mathcal{M}_{SS}: w\neq s^*} GIC(w) > GIC(s^*))\\
\ge 1 - \PP(\min_{w\in\mathcal{M}_{SS}: w\subset s^*} GIC(w) \le GIC(s^*)) \\ 
- \PP(\min_{w\in\mathcal{M}_{SS}: w\supset s^*} GIC(w) \le GIC(s^*)).
\end{multline*}

Thus $\PP(\hat{s}^*=s^*)\rightarrow 1$ in view of above inequality, (\ref{coro_SS_consistency_general_eq_supsets}) and (\ref{coro_SS_consistency_general_eq_subsets}).
\end{proof}

Consider now  the case  of semiparametric model defined in (\ref{semiparam}). Then it is known (cf \cite{Brillinger82}, \cite{Ruud83} and \cite{LiDuan89})  that provided $X$ has  regressions satisfying
\begin{equation}
\label{lin_reg}
E(X|\beta^TX)= u_0 + u\beta^TX,
\end{equation}
where $\beta$ is the true parameter, then $\beta^*=\eta\beta$ and $\eta\neq 0$ if ${\rm Cov}(Y,X)\neq 0$. The linear regressions condition (\ref{lin_reg}) is satisfied e.g. by eliptically contoured distribution, in particular by multivariate normal. We refer  also to \cite{KubkowskiMielniczuk17a})
for discussion and up-to date references to this problem.
In the discussed case  $s^*=s$ and we can state the following result.
\begin{corollary}
\label{Ruud2}
Assume that (\ref{lin_reg}) and assumption of Corollary \ref{coro_SS_consistency_general} are satisfied.
Then $P(\hat s^*=s)\to 1$.
\end{corollary}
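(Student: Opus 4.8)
The plan is to derive Corollary \ref{Ruud2} directly from Corollary \ref{coro_SS_consistency_general}, the only additional work being to show that under the semiparametric model (\ref{semiparam}) the target support $s^*$ of the selection procedure coincides with the support $s$ of the true parameter $\beta$. Once this identification is made, the conclusion is essentially a verbatim restatement of Corollary \ref{coro_SS_consistency_general}.

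First I would invoke the collinearity result for the semiparametric model (\cite{Brillinger82}, \cite{Ruud83}, \cite{LiDuan89}): under (\ref{semiparam}) together with the linear regressions condition (\ref{lin_reg}), the risk minimizer satisfies $\betab^* = \eta\beta$ for some scalar $\eta$, with $\eta \neq 0$ as soon as $\mathrm{Cov}(Y,X) \neq 0$. In the setting under discussion $\mathrm{Cov}(Y,X)\neq 0$, so $\eta\neq 0$. Since multiplication by a nonzero scalar does not alter the set of nonvanishing coordinates, we obtain
$$ s^* = \supp(\betab^*) = \supp(\eta\beta) = \supp(\beta) = s. $$

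It then remains only to apply Corollary \ref{coro_SS_consistency_general}, whose hypotheses are assumed to hold, so that $\PP(\hat{s}^* = s^*)\to 1$; substituting $s^* = s$ yields $\PP(\hat{s}^* = s)\to 1$, as required. I do not anticipate any genuine obstacle in this argument: the entire content of the corollary is the identification $s^* = s$, and the single point requiring care is the non-degeneracy $\eta\neq 0$, which is precisely what guarantees that the supports of $\betab^*$ and $\beta$ agree. For a fully self-contained treatment one could cite \cite{KubkowskiMielniczuk17a} for the precise conditions ensuring $\eta \neq 0$ in the binary semiparametric case, but establishing those conditions lies outside the scope of this corollary.
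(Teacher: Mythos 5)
Your proposal is correct and follows essentially the same route as the paper: the paper states Corollary \ref{Ruud2} as an immediate consequence of the preceding discussion, namely that under (\ref{lin_reg}) one has $\betab^*=\eta\beta$ with $\eta\neq 0$ (when $\Cov(Y,X)\neq 0$), hence $s^*=s$, and then Corollary \ref{coro_SS_consistency_general} gives $\PP(\hat{s}^*=s^*)=\PP(\hat{s}^*=s)\to 1$. Your only addition is to make explicit the non-degeneracy condition $\Cov(Y,X)\neq 0$ guaranteeing $\eta\neq 0$, which the paper likewise treats as implicit in the setting.
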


\begin{remark}\label{remark_SS_consistency_general}
If $p_n = O(e^{cn^{\gamma}})$ for some $c>0$, $\gamma \in (0,1/2)$, $\xi\in(0,0.5-\gamma)$, $u\in(0,0.5-\gamma-\xi)$, $k_n = O(n^{\xi})$, $\lambda = C_n \sqrt{\log (p_n)/n}$, $C_n = O(n^{u})$, $C_n \rightarrow +\infty$, $n^{-\frac{\gamma}{2}}=O(\beta^*_{min})$, $a_n=dn^{\frac{1}{2}-u}$, then assumptions imposed on asymptotic behaviour of parameters in Corollary \ref{coro_SS_consistency_general} are satisfied.
\end{remark}

\begin{remark}
We note that in order to apply Corollary \ref{coro_SS_consistency_general} to two-step procedure based on Lasso it is required that $|s^*|\le k_n$ and that the support of Lasso estimator with probability tending to $1$ contains no more than $k_n$ elements. Some results bounding $|\supp \hat{\betab}_L|$ are available for deterministic $\X$ (see  \cite{Huangetal2008}) and for random $\X$ (see \cite{Tibshirani2013}), but they are too weak to be useful for EBIC penalties. The other possibility to prove consistency of two-step procedure is to modify it in the first step by using thresholded Lasso (see \cite{Zhou2010}) corresponding to $k_n'$ largest Lasso coefficients where $k_n'\in \NN$ is such that $k_n =o(k_n')$.
\end{remark}

\section{Numerical experiments}
\subsection{Selection procedures}
In performed simulations we have implemented modifications of SS procedure introduced in Section \ref{sect_GIC_SS}, as the original procedure is defined for a single $\lambda$ only. In practice it is generally more convenient to consider some sequence $\lambda_1>\ldots>\lambda_m>0$ instead of only one $\lambda$ in the first step in order to avoid choosing 'the best' $\lambda$.  For  the chosen sequence $\lambda_1,\ldots,\lambda_m$, we construct  corresponding families $\mathcal{M}_1,\ldots,\mathcal{M}_m$ analogously to  $\mathcal{M}$ in the step 4 of SS procedure. Thus we arrive here at the following SSnet procedure, which is the modification of SOSnet procedure in \cite{Pokarowskietal}. Below $\tilde b$ is a vector $b$ with first coordinate corresponding to intercept omitted, $b=(b_0,{\tilde \bb}^T)^T)$:
\begin{enumerate}
\item Choose some $\lambda_1>\ldots>\lambda_m>0$.
\item Find $\hat{\betab}_{L}^{(i)} = \argmin\limits_{\bb \in \RR^{p_n+1}} R_n(\bb) + \lambda_i ||\tilde\bb||_1$ for $i=1,\ldots,m$.
\item Find $\hat{s}_L^{(i)} = \supp \hat{\tilde\betab}_L^{(i)}  = \{j_1^{(i)},\ldots,j_{k_i}^{(i)}\}$ where $j_1^{(i)},\ldots,j_{k_i}^{(i)}$ are such that $|\hat{\beta}_{L,j_1^{(i)}}^{(i)}|\ge \ldots \ge |\hat{\beta}_{L,j_k^{(i)}}^{(i)}|>0$ for $i=1,\ldots,m$.
\item Define $\mathcal{M}_i = \{ \{j_1^{(i)}\}, \{j_1^{(i)},j_2^{(i)}\}, \ldots, \{j_1^{(i)},j_2^{(i)},\ldots,j_{k_i}^{(i)}\}\}$ for $i=1,\ldots,m$.
\item Define $\mathcal{M} = \{\emptyset\} \cup\bigcup\limits_{i=1}^{m} \mathcal{M}_i$.
\item Find $\hat{s}^* = \argmin\limits_{w\in \mathcal{M}} GIC(w)$, where 
$$GIC(w) = \min\limits_{\bb\in\RR^{p_n+1}: \supp \tilde\bb \subseteq w} nR_n(\bb) + a_n (|w|+1).$$
\end{enumerate} 

Instead of constructing families $\mathcal{M}_i$ for each $\lambda_i$ in SSnet procedure, $\lambda$ can be chosen by cross-validation using 1SE rule (see \cite{Friedmanetal2010}) and then proceed as in SS procedure. We  call this procedure SSCV.

The last procedure considered has been introduced by Fan and Tang in \cite{FanTang013} and is Lasso procedure with  penalty parameter $\hat\lambda$  chosen in a data-dependent way  as  for SSCV. Namely,  it is  the minimizer of GIC criterion with $a_n = \log(\log n) \cdot \log p_n$  for which ML estimator has been replaced by Lasso estimator with penalty $\lambda$. Once $\hat\beta_L(\hat\lambda_L)$ is calculated  then $\hat s^*$ is defined as its support.
The  procedure is called LFT in the sequel.
%

We list below versions of the above procedures along with R packages, which were used to choose sequence $\lambda_1,\ldots,\lambda_m$ and computation of Lasso estimator. The following packages were chosen based on selection performance after initial tests for each loss and procedure:
\begin{itemize}
\item SSnet with logistic or quadratic loss: \texttt{ncvreg},
\item SSCV or LFT with logistic or quadratic loss: \texttt{glmnet},
\item SSnet, SSCV or LFT with Huber loss (cf \cite{YiHuang2017}): \texttt{hqreg}.
\end{itemize}

The following functions which were used to optimize $R_n$ in GIC minimization step for each loss:
\begin{itemize}
\item logistic loss: \texttt{glm.fit} (package \texttt{stats}),
\item quadratic loss: \texttt{.lm.fit} (package \texttt{stats}),
\item Huber loss: \texttt{rlm} (package \texttt{rlm}).
\end{itemize}

%
Before applying investigated procedures, each column of matrix 
$$\XX = (X_1,\ldots,X_n)^T$$ 
was standardized as $\hat{\betab}_L$ depends on scaling of predictors. We set length of $\lambda_i$ sequence to $m=20$. Moreover, in all  procedures we considered only $\lambda_i$ for which $|\hat{s}_L^{(i)}|\le n$. It is due to the fact that when $|\hat{s}_L^{(i)}|> n$ Lasso  and ML solutions are not unique (see \cite{RossetZhuHastie2004}, \cite{Tibshirani2013}). For Huber loss we set parameter $\delta = 1/10$ (see \cite{YiHuang2017}). Number of folds in SSCV was set to $K=10$.

Each simulation run  consisted of $L$ repetitions, during which samples $\XX_k=(\X_{1}^{(k)},\ldots,\X_{n}^{(k)})^T$ and $\Y_k = (Y_1^{(k)},\ldots,Y_n^{(k)})^T$ were generated for $k=1,\ldots,L$. For $k$-th sample $(\XX_k,\Y_k)$  estimator $\hat{s}_k^*$ of set of active predictors is obtained by a given procedure as the support of $\hat{\tilde\betab}(\hat{s}_k^*)$, where
$$\hat{\betab}(\hat{s}_k^*)  = (\hat{\betab}_0(\hat{s}_k^*), \hat{\tilde\betab}(\hat{s}_k^*)^T)^T = \argmin_{\bb\in \RR^{p_n+1}} \frac{1}{n} \sum\limits_{i=1}^{n} \rho(\bb^T\X_i^{(k)},Y_i^{(k)})$$ 
is ML estimator for $k$-th sample. 
We denote by
$\mathcal{M}^{(k)}$ is the family $\mathcal{M}$ obtained by a given procedure for $k$-th sample.

In our numerical experiments we have computed the following measures of selection performance which gauge co-direction of  true parameter $\beta$ and $\hat\beta$ and the interplay between $s^*$ and $\hat s^*$:

\begin{itemize}
\item $ANGLE = \frac{1}{L} \sum\limits_{k=1}^{L} \arccos |\cos \angle (\tilde\betab_0,\hat{\tilde\betab}(\hat{s}_k^*))|$, where 
$$\cos \angle (\tilde\betab,\hat{\tilde\betab}(\hat{s}_k^*)) = \frac{\sum\limits_{j=1}^{p_n} \beta_j \hat{\beta}_j(\hat{s}_k^*)}{||\tilde\betab||_2 ||\hat{\tilde\betab}(\hat{s}_k^*)||_2 } $$ 
and we let $\cos \angle (\tilde\betab,\hat{\tilde\betab}(\hat{s}_k^*)) = 0$, if 
$||\tilde\betab||_2 ||\hat{\tilde\betab}(\hat{s}_k^*)||_2  = 0$,
\item $P_{inc} = \frac{1}{L} \sum\limits_{k=1}^{L} I(s^*\in\mathcal{M}^{(k)})$,
\item $P_{equal} = \frac{1}{L}\sum\limits_{k=1}^{L}  I(\hat{s}_k^* = s^*)$.
\item $P_{supset} = \frac{1}{L}\sum\limits_{k=1}^{L} I(\hat{s}_k^* \supseteq s^*)$.
\end{itemize} 

%
%
\subsection{Regression models considered}
\subsubsection{Model M1}\label{sect_numerical_experiments_M1_setup}
In order to investigate behaviour of two-step procedure under misspecification we considered two similar models with different sets of predictors. As sets of predictors differ, this results in correct specification of the first model (model M1) and misspecification of the second (Model M2).\\
Namely, we generated $n$ observations $(\X_i,Y_i)\in\RR^{p+1}\times\{0,1\}$ for $i=1,\ldots,n$ such that:
\begin{gather*}
X_{i0} = 1,X_{i1} = Z_{i1}, X_{i2} = Z_{i2}, X_{ij} = Z_{i,j-7} \text{ for } j=10,\ldots,p,\\
X_{i3} = X_{i1}^2, X_{i4} = X_{i2}^2, X_{i5} = X_{i1}X_{i2},\\
X_{i6} = X_{i1}^2X_{i2}, X_{i7} = X_{i1}X_{i2}^2, X_{i8}=X_{i1}^3,X_{i9}=X_{i2}^3,
\end{gather*}
where $\Z_i = (Z_{i1},\ldots,Z_{ip})^T \sim \ND_p(0_p,\Sigmab)$, $\Sigmab = [\rho^{|i-j|}]_{i,j=1,\ldots,p}$ and $\rho\in (-1,1)$.
We consider response function $q(x) = q_L(x^3)$ for $x\in\RR$, $s=\{1,2\}$ and $\betab_s = (1,1)^T$. This means that:
\begin{align*}
\PP(Y_i = 1 | \X_i = \x_i) &= q(\betab_s^T\x_{i,s}) = q(x_{i1}+x_{i2}) = q_L((x_{i1}+x_{i2})^3) \\
&= q_L(x_{i1}^3+x_{i2}^3+3x_{i1}^2x_{i2}+3x_{i1}x_{i2}^2) \\
&= q_L(3x_{i6}+3x_{i7}+x_{i8}+x_{i9}).
\end{align*}   
We observe that the above binary model is well specified with respect to family of  fitted logistic models. Hence $s^* = \{6,7,8,9\}$ and $\betab^*_{s^*} = (3,3,1,1)^T$ are respectively set of active predictors and non-zero coefficients of projection onto family of logistic models. 

We considered the following parameters in numerical experiments: $n=500, p =150,\rho \in\{-0.9+0.15\cdot k\colon ~ k = 0,1,\ldots,12\}$ and $L=500$ - number of generated data sets for each combination of parameters. We investigated procedures SSnet, SSCV and LFT using logistic, quadratic and Huber (cf \cite{YiHuang2017}) loss functions. For procedures SSnet and SSCV we used GIC penalties with:
\begin{itemize}
\item $a_n = \log n$ (BIC),
\item $a_n = \log n + 2\log p_n$ (EBIC1).
\end{itemize}

\subsubsection{Model M2}\label{sect_numerical_experiments_M2_setup}
We generated $n$ observations $(\X_i,Y_i)\in\RR^{p+1}\times\{0,1\}$ for $i=1,\ldots,n$ such that $\X_i = (X_{i0},X_{i1},\ldots,X_{ip})^T$ and $(X_{i1},\ldots,X_{ip})^T \sim \ND_p(0_p,\Sigmab)$, $\Sigmab = [\rho^{|i-j|}]_{i,j=1,\ldots,p}$ and $\rho\in (-1,1)$.
Response function is  $q(x) = q_L(x^3)$ for $x\in\RR$, $s=\{1,2\}$ and $\betab_s = (1,1)^T$. This means that:
\begin{equation*}
\PP(Y_i = 1 | \X_i = \x_i) = q(\betab_s^T\x_{i,s}) = q(x_{i1}+x_{i2}) = q_L((x_{i1}+x_{i2})^3) 
\end{equation*}   
This model in comparison to the one presented in Section \ref{sect_numerical_experiments_M1_setup} does not contain monomials of $X_{i1}$ and $X_{i2}$ of degree higher than $1$ in its set of predictors. We observe that this binary model is missspecified with respect to fitted family of logistic models, because $q(x_{i1}+x_{i2})\not\equiv q_L(\betab^T\x_i)$ for any $\betab\in\RR^{p+1}$. However, in this case linear regressions condition  (\ref{lin_reg}) is satisfied for $ \X$, as it follows normal distribution (see \cite{KubkowskiMielniczuk18},\cite{LiDuan89}) . Hence in view of Proposition 3.8 in \cite{KubkowskiMielniczuk17a} we have $s^*_{log} = \{1,2\}$ and $\betab^*_{log,s^*_{log}} = \eta(1,1)^T$ for some $\eta>0$.
Parameters $n,p,\rho$ as well as $L$ were chosen as for model M1.


\subsubsection{Results for models M1 and M2}
We look first at behaviour of $P_{inc}$, $P_{equal}$ and $P_{supset}$ for the considered procedures.
We observe that values of $P_{inc}$ for SSCV and SSnet are close to $1$ for low correlations in  model M2  for every tested loss (see Figure \ref{fig_num_exp_M1_M2_P_inc}). In model M1  $P_{inc}$ attains the largest values for SSnet procedure and logistic loss for low correlations - this is due to the fact that in most cases corresponding family $\mathcal{M}$ is the largest among the families created by considered procedures. $P_{inc}$ is close to $0$ in model M1 for quadratic and Huber loss, what results in low values of the remaining indices. This may be due to strong dependences between predictors in model M1, note e.g. that we  have  $\Cor (X_{i1},X_{i8}) = 3/\sqrt{15} \approx 0.77$. It is seen that in model M1 inclusion probability $P_{inc}$ is much lower than in model M2 (except for negative correlations). It it also seen that $P_{inc}$ for SSCV is larger than for LFT and LFT fails with respect to $P_{inc}$ in M1. 

In  model M1   the largest values $P_{equal}$ are attained  for SSnet with BIC penalty, then for SSCV with EBIC1 penalty (see Figure \ref{fig_num_exp_M1_M2_P_equal}). In the model M2 $P_{equal}$ is close to $1$ for SSnet and SSCV with EBIC1 penalty and was much larger than $P_{equal}$ for the corresponding versions using BIC penalty. We also note  that choice of loss was relevant only for larger correlations. These results confirm theoretical result of Theorem 2.1 in \cite{LiDuan89} which show that collinearity holds for broad class of loss function. We observe also that although in the model M2 remaining procedures do not select $s^*$ with high probability, they select  its superset, what is indicated by values of $P_{supset}$ (see Figure \ref{fig_num_exp_M1_M2_P_supset}). This analysis is confirmed by an analysis of $ANGLE$ measure (see Figure \ref{fig_num_exp_M1_M2_angle_sparse}), which attains values close to $0$, when $P_{supset}$ is close to $1$. Low values of $ANGLE$ measure mean that estimated vector $\hat{\tilde\betab}(\hat{s}_k^*)$ is approximately proportional to $\tilde\betab$, what was the case for M2 model, where  normal predictors satisfy linear regressions condition. Note that  the angles of $\hat{\tilde\betab}(\hat{s}_k^*)$ and $\tilde\betab^*$ in M1 significantly  differ  despite the fact that M1 is well specified. Also, for the best performing procedures in both models and {\it any} loss considered, $P_{equal}$ was much larger in M2 than in M1, despite the fact that the latter is correctly specified.  This shows that choosing a simple misspecified model which retains crucial characteristics of
the well specified large model instead of the latter might be beneficial.


In model M1 procedures with BIC penalty performed better than those with EBIC1 penalty, however the gain for $P_{equal}$ was much smaller than the gain when using EBIC1 in M2. LFT procedure performed poorly in model M1 and reasonably well in model M2. The overall winner in both models is SSnet. SSCV performs only slightly worse than SSnet in M2 but performs significantly worse in M1.

Analysis of computing times of 1st and 2nd stage of each procedure shows that SSnet procedure creates large families $\mathcal{M}$ and  GIC minimization becomes computationally intensive. We also observe that the first stage for SSCV is more time consuming  than for SSnet, what is caused by multiple fitting of Lasso in cross-validation. However, SSCV is much faster than SSnet in the second  stage.\\
We conclude that in the considered experiments SSnet with EBIC1 penalty works the best in most cases, however even for the winning procedure strong dependence of predictors results in deterioration of its performance. It is  also clear from our experiments that a choice of GIC penalty is crucial for its performance. Modification of SS procedure which would perform satisfactorily for large correlations is still an open problem.
\begin{figure}[H]
\centering
\includegraphics[width = \textwidth]{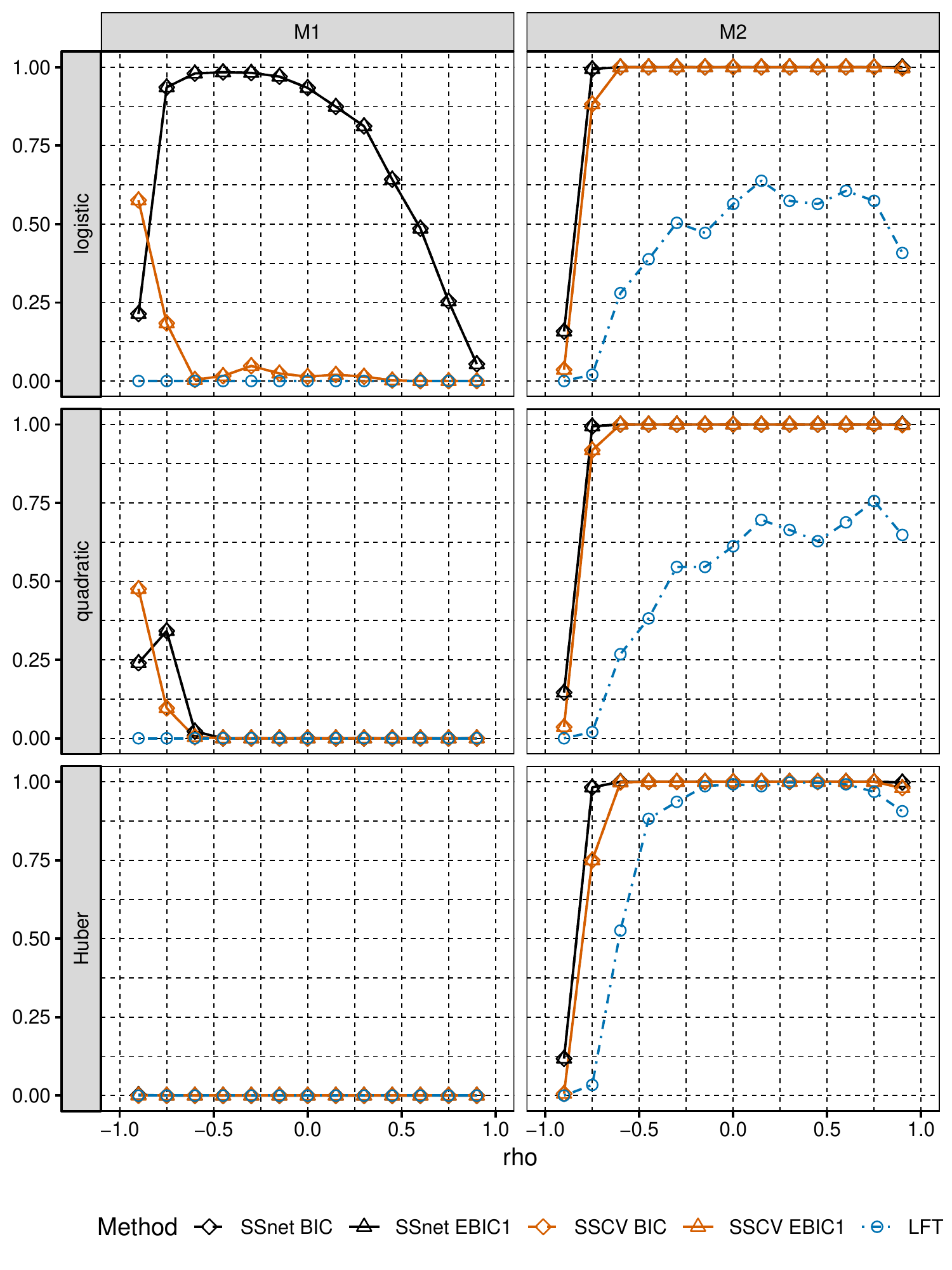}
\caption{$P_{inc}$ for models M1 and M2}
\label{fig_num_exp_M1_M2_P_inc} 
\end{figure}

\begin{figure}[H]
\centering
\includegraphics[width = \textwidth]{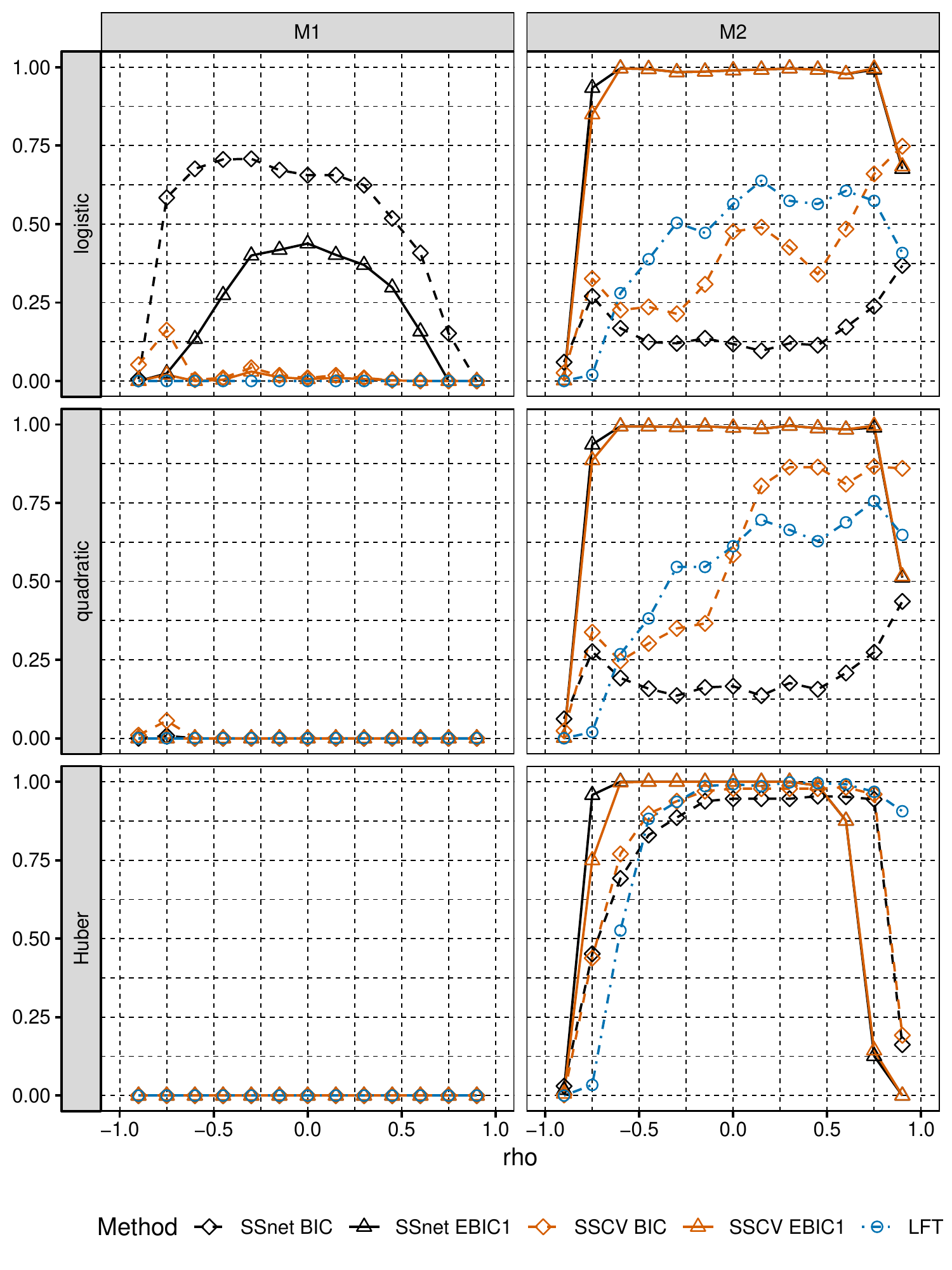}
\caption{$P_{equal}$ for models M1 and M2}
\label{fig_num_exp_M1_M2_P_equal} 
\end{figure}

\begin{figure}[H]
\centering
\includegraphics[width = \textwidth]{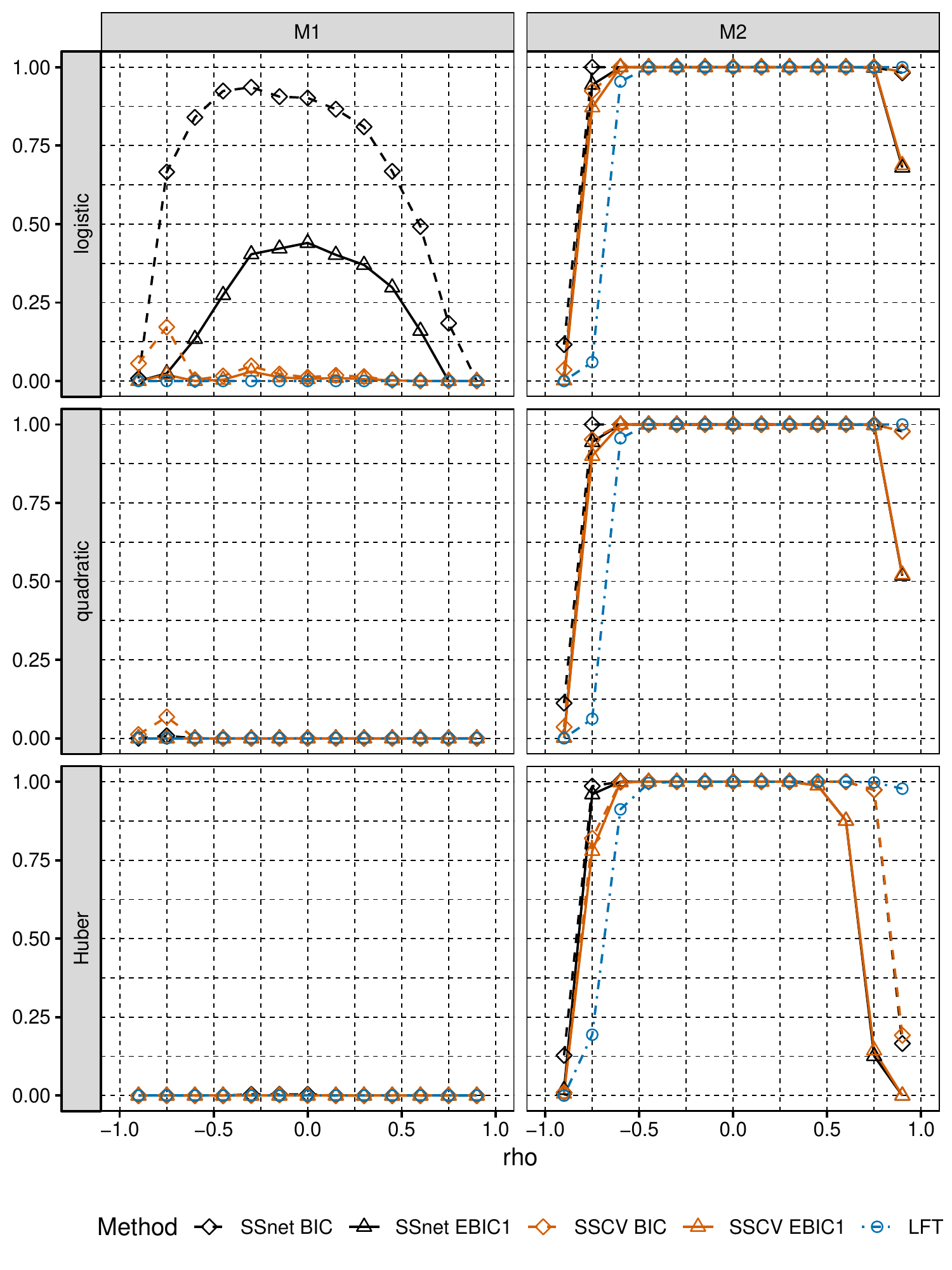}
\caption{$P_{supset}$ for models M1 and M2}
\label{fig_num_exp_M1_M2_P_supset} 
\end{figure}

\begin{figure}[H]
\centering
\includegraphics[width = \textwidth]{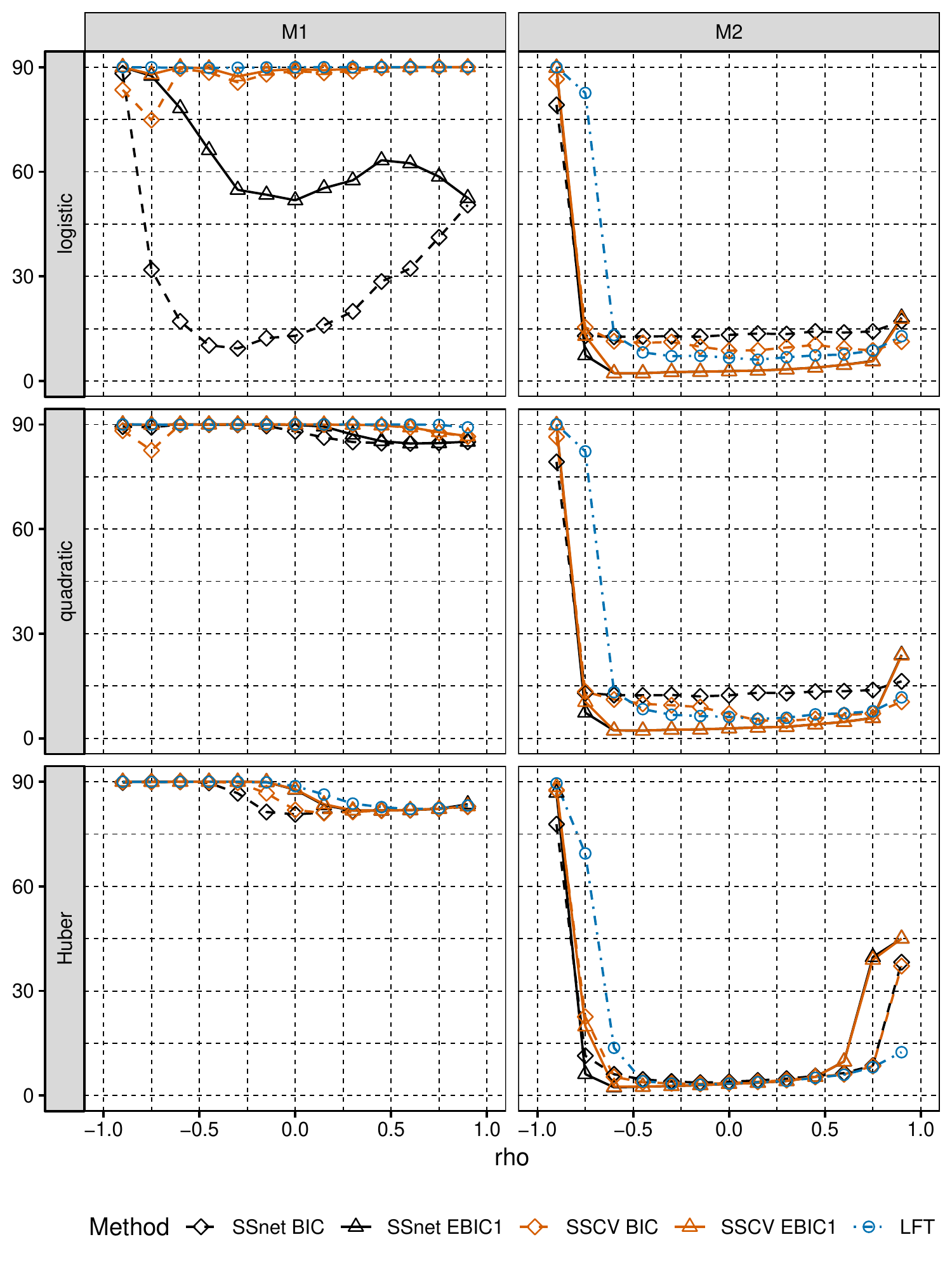}
\caption{$ANGLE$ for models M1 and M2}
\label{fig_num_exp_M1_M2_angle_sparse} 
\end{figure}

\section{Appendix}
Proof of Lemma \ref{lemma_basic_inequality}:
\begin{proof}
Firstly, observe that  function $R_n$ is convex as  $\rho$ is convex. Moreover, from the definition of $\hat{\betab}_L$ we get the inequality:
\begin{equation}\label{lemma_basic_inequality_ineq1}
W_n(\hat{\betab}_L) = R_n(\hat{\betab}_L) - R_n(\betab^*) \le \lambda (||\betab^*||_1 - ||\hat{\betab}_L||_1).
\end{equation}
We note that $\vv - \betab^* \in B_1(r),$ as we have:
\begin{equation}\label{lemma_basic_inequality_intermediate_point_lies_in_ball}
||\vv-\betab^*||_1 = \frac{ ||\hat{\betab}_L - \betab^*||_1}{r + ||\hat{\betab}_L - \betab^*||_1} \cdot r \le r.
\end{equation}
By definition of $W_n,$  convexity of $R_n$, (\ref{lemma_basic_inequality_intermediate_point_lies_in_ball}) and definition of $S$ we have:
\begin{align}
W(\vv) & = W(\vv) -W_n(\vv) + R_n(\vv) - R_n(\betab^*)  \nonumber\\
& \le W(\vv) -W_n(\vv) + u(R_n(\hat{\betab}_L) - R_n(\betab^*)) \le S(r) + u W_n(\hat{\betab}_L). \label{lemma_basic_inequality_ineq2}
\end{align}
From the convexity of $l_1$ norm, (\ref{lemma_basic_inequality_ineq2}), (\ref{lemma_basic_inequality_ineq1}), $||\betab^*||_1 = ||\betab^*_{s^*}||_1$ and triangle inequality it follows that:
\begin{align}
W(\vv) + \lambda ||\vv||_1 & \le W(\vv) + \lambda u ||\hat{\betab}_L||_1 + \lambda(1-u) ||\betab^*||_1 \nonumber\\
& \le S(r) + uW_n(\hat{\betab}_L) + u\lambda (||\hat{\betab}_L||_1 - ||\betab^*||_1) + \lambda||\betab^*||_1 \nonumber\\
& \le S(r) + \lambda ||\betab^*||_1 \le S(r) + \lambda ||\betab^*-\vv_{s^*}||_1 + \lambda ||\vv_{s^{*}}||_1. \label{lemma_basic_inequality_ineq3}
\end{align}
Hence:
\begin{multline*}
W(\vv) + \lambda ||\vv - \betab^*||_1 = (W(\vv) + \lambda ||\vv||_1) + \lambda (||\vv - \betab^*||_1 - ||\vv||_1) \\
\le S(r) + \lambda ||\betab^*-\vv_{s^*}||_1 + \lambda ||\vv_{s^{*}}||_1 + \lambda (||\vv - \betab^*||_1 - ||\vv||_1) = S(r) + 2\lambda ||\betab^*-\vv_{s^*}||_1.
\end{multline*}
\end{proof}

\begin{lemma}\label{lemma_appendix_subgaussian_product_with_bounded_independent}
Assume that $S\sim Subg(\sigma^2)$ and $T$ is random variable such that $|T|\le M,$ where $M$ is some positive constant and $S$ and $T$ are independent. Then $ST \sim Subg(M^2 \sigma^2).$ 
\end{lemma}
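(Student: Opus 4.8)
The plan is to verify directly the moment generating function bound that \emph{defines} subgaussianity in this paper: I must show that $E\exp(tST)\le \exp(t^2M^2\sigma^2/2)$ for every $t\in\R$. The whole argument rests on conditioning on $T$ and exploiting independence, so that the random factor $T$ can be frozen and the subgaussian bound for $S$ applied with a deterministic multiplier.

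First I would write, by the tower property, $E\exp(tST)=E\big[E[\exp(tST)\mid T]\big]$. Fixing a value $T=\tau$ and using that $S$ and $T$ are independent, the inner conditional expectation equals $E\exp\big((t\tau)S\big)$, which is just the moment generating function of $S$ evaluated at the real number $t\tau$. By the assumption $S\sim Subg(\sigma^2)$ this is at most $\exp\big((t\tau)^2\sigma^2/2\big)$.

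Next I would use the boundedness of $T$. Since $|T|\le M$ almost surely, we have $\tau^2\le M^2$, whence $\exp\big((t\tau)^2\sigma^2/2\big)\le \exp\big(t^2M^2\sigma^2/2\big)$, and the right-hand side is deterministic. Substituting this pointwise bound back and integrating over the distribution of $T$ yields $E\exp(tST)\le \exp(t^2M^2\sigma^2/2)$, which is exactly the claim $ST\sim Subg(M^2\sigma^2)$.

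The computation is short and I expect no serious obstacle. The only point deserving care is the very first step: the subgaussian inequality for $S$ is invoked with the \emph{random} scalar $tT$ in place of a fixed argument. This is legitimate precisely because of the independence assumption --- conditioning on $T$ reduces the bound to the ordinary one for $S$ at each fixed real value of the multiplier, and the definition of $Subg(\sigma^2)$ was assumed to hold for all $t\in\R$, including negative values, so the sign of $T$ causes no difficulty.
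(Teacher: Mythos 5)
Your proof is correct and is essentially identical to the paper's: both condition on $T$ via the tower property, use independence to apply the subgaussian bound for $S$ at the frozen argument $tT$, and then bound $T^2\le M^2$ to obtain $\E e^{tST}\le e^{t^2M^2\sigma^2/2}$. No gaps; your remark about why the random scalar $tT$ is admissible is exactly the point the paper's one-line proof leaves implicit.
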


\begin{proof}
Observe that:
\begin{equation*}
\EE e^{tST} = \EE ( \EE( e^{tST}|T) ) \le \EE e^{\frac{t^2T^2\sigma^2}{2}} \le e^{\frac{t^2M^2\sigma^2}{2}}.
\end{equation*}
\end{proof}

\bibliography{References}
\bibliographystyle{plainnat}
\end{document}